\newtheorem{definition}{Definition}[section]
\newtheorem{lemma}{Lemma}[section]
\newtheorem{theorem}{Theorem}[section]
\newtheorem{proposition}{Proposition}[section]
\newtheorem{corollary}{Corollary}[section]
\newcommand{\gen}[1]{^{(#1)}}
\renewcommand{\th}[1]{#1^\text{th}}
\begin{document}

\title{Reachability Analysis of Large Linear Systems with Uncertain Inputs in the Krylov Subspace}

\author{Matthias~Althoff
\thanks{Matthias Althoff is with the Department of Computer Science, 
        Technische Universit\"at M\"unchen, 85748 Garching, Germany,
        email: {\tt\small althoff@tum.de}}}

%

\maketitle

\begin{abstract}
 One often wishes for the ability to formally analyze large-scale systems---typically, however, one can either formally analyze a rather small system or informally analyze a large-scale system. This work tries to further close this performance gap for reachability analysis of linear systems. Reachability analysis can capture the whole set of possible solutions of a dynamic system and is thus used to prove that unsafe states are never reached; this requires full consideration of arbitrarily varying uncertain inputs, since sensor noise or disturbances usually do not follow any patterns. We use Krylov methods in this work to compute reachable sets for large-scale linear systems. While Krylov methods have been used before in reachability analysis, we overcome the previous limitation that inputs must be (piecewise) constant. As a result, we can compute reachable sets of systems with several thousand state variables for bounded, but arbitrarily varying inputs. 
\end{abstract}

\begin{IEEEkeywords}
Reachability analysis, linear systems, Krylov subspace, uncertain inputs, large-scale systems.
\end{IEEEkeywords}

\IEEEpeerreviewmaketitle

\section{Introduction}

Reachability analysis computes the set of possible solutions of dynamic systems subject to uncertain initial states and inputs. The availability of all possible solutions can be used for many purposes: formal verification of dynamic systems with discrete and/or continuous dynamics \cite{Banerjee2012, Asarin2006}; computation of invariance sets \cite{Kolmanovsky1998a, Blanchini1999}; computation of the region of attraction \cite{El-Guindy2017b}; optimization of constrained systems with uncertainties \cite{Limon2010, Schuermann2017a}; set-based observers \cite{Combastel2005, Le2013b}; and conformance checking \cite{Roehm2016}. The theory of efficiently computing reachable sets is advancing rapidly and their usefulness has already been demonstrated for many applications, such as automated driving \cite{Falcone2011, Althoff2014b}, robotics \cite{Taeubig2012, Pereira2016b}, power systems \cite{VillegasPico2014, El-Guindy2016}, and analog/mixed-signal circuits \cite{Frehse2006,Althoff2013c}. 

\paragraph{Reachability analysis of linear systems} The reachability analysis in particular of linear continuous systems has been intensely researched. While the main approach for computing reachable sets of linear systems by using the superposition principle has not significantly changed in recent years \cite[Eq.~4.14]{Dang2000}, much progress has been made by experimenting with different set representations: ellipsoids \cite{Kurzhanskiy2007}, polytopes \cite{Chutinan2003}, zonotopes \cite{Girard2005}, zonotope bundles \cite{Althoff2011f}, support functions \cite{Girard2008b}, level sets \cite{Mitchell2005}, and combination of support functions and zonotopes \cite{Althoff2016c}. In particular, when using zonotopes, support functions, or the combination thereof, one can efficiently compute systems with several hundred continuous state variables. Recently, a new technique has been proposed which combines simulation results by using the superposition principle to represent reachable states via generalized star sets for certain inputs \cite{Duggirala2016, Bak2017c, Bak2018a} and uncertain, piecewise-constant inputs \cite{Bak2017b}. Although this technique can compute very large systems---in some cases up to a billion states variables \cite{Bak2018a}---it cannot consider uncertain, arbitrarily time-varying inputs and requires a formally verified solver for linear systems. Even more recently, decomposition techniques have been developed to speed up reachability analysis of linear systems \cite{Bogomolov2018}; however, arbitrarily-varying inputs cannot be considered in \cite{Bogomolov2018}. Since one cannot exactly compute the reachable set of linear systems, except when all eigenvalues are real or imaginary \cite{Lafferriere1999a}, one typically demands tight over-approximations of reachable sets. 


\paragraph{Further use for nonlinear systems} Reachable set computations of linear systems are often embedded in algorithms for computing reachable sets of nonlinear systems, either by partitioning the state space into conservatively linearized regions \cite{Asarin2007} or by conservatively and continuously linearizing a nonlinear system along its center trajectory \cite{Althoff2008c, Dang2010}. With conservative linearization, we refer to techniques that compensate linearization errors by adding uncertainty, e.g., in the form of additive uncertain inputs. 

\paragraph{Order reduction techniques} For large-scale linear systems with uncertain time-varying inputs beyond $1000$ continuous state variables, however, even the most efficient reachability algorithms become too slow for practical use. One of the main reasons is that the exponential matrix $e^{At}$ of the system matrix $A$ in the linear system dynamics $\dot{x} = Ax + Bu$ may be unbearably time-consuming to compute \cite{Moler2003}. It should be noted that the dimension is not the only critical parameter; sparsity of $A$ and the sensitivity of the matrix exponential \cite[Sec.~2]{Moler2003} are also important parameters influencing the computation time. For this reason, methods have been developed to reduce the order of the investigated system, which are generally referred to as order reduction techniques. There exist a vast number of different techniques surveyed in e.g., \cite{Saad2003,Antoulas2005,Schilders2008}. Most order reduction techniques aim at achieving a similar input/output behavior when the system is initially in a steady state; they rely on the fact that large-scale systems often have a large number of state variables but a rather small number of input and output variables. A typical example is that of infinite-dimensional systems, which are spatially discretized, controlled by few actuators, and sensed by few sensors. Many applications of reachability analysis, however, such as formal verification, require that all or many state variables are accurately approximated as well. For instance, in a large power network, all voltages and frequencies have to stay within certain bounds. Subsequently, we review previous work on combining reachability analysis with order reduction techniques.  

\paragraph{Order reduction for reachability analysis} To the best knowledge of the author, the first work combining reachability analysis with order reduction techniques is \cite{Han2004}. There, simulating the solutions of all vertices of an initial set was required to guarantee an error bound for a set of initial states---this approach is exponential in the number of state variables when each variable is uncertain within an interval. At the time the approach in \cite{Han2004} was proposed, tools for reachability analysis had an exponential complexity as well, so that overall the computation time could be significantly reduced. However, modern tools such as SpaceEx \cite{Frehse2011}, Flow* \cite{Chen2013}, HyLAA \cite{Bak2017c}, XSpeed \cite{Gurung2016}, or CORA \cite{Althoff2015a} have a polynomial complexity, as demonstrated in \cite{Althoff2017b,Althoff2018c}, and thus would most certainly outperform the technique proposed in \cite{Han2004}, even without any order reduction. The same authors later combined reachability analysis with Krylov subspace approximation methods \cite{Han2006b}. The advantage of this technique is that it does not scale exponentially in the number of state variables; however, it can only handle linear systems with fixed input, which are also referred to as affine systems. Recently, the work from \cite{Han2004} was continued in \cite{Tran2017b}, which considers order reduction in the input/output sense for stable linear systems. In contrast, this work can also handle unstable systems and reconstruct the whole set of states, not just the outputs. For nonlinear systems, non-rigorous reduction techniques have been presented in \cite{Chou2017}; however, unlike in this work, the results are not formal.

\paragraph{Approximate bisimulation} Somewhat related to order reduction techniques is (approximate) bisimulation \cite{vanDerSchaft2004, Girard2011}, which basically shows that two systems have similar output behavior for the same inputs, while at the same time a relation $\mathcal{S}$ exists so that $(x(t),\tilde{x}(t)) \in \mathcal{S}$. However, even bisimulation techniques for linear systems do not scale to the system dimensions handled in this work, since they require solving linear matrix inequalities \cite{VanAntwerp2000} to find a bisimulation relation \cite{Girard2007a}. 

\paragraph{Contribution} This work extends the state of the art in reachability analysis of linear systems by proposing the first method with uncertain, time-varying inputs in the reduced Krylov subspace. This reduction makes it possible to compute reachable sets of systems with a number of continuous state variables that was previously infeasible. Please note that input/output order reduction techniques such as transformation, truncation, and projection (combination of transformation and truncation) cannot reconstruct the state, in contrast to the reduction presented in this work. However, we also improve computational efficiency when only the input/output behavior is of interest. Our proposed technique rigorously considers reduction errors. In contrast to \cite{Han2006b}, our approach a) does not rely on computing errors by the norm of the system matrix $\|A\|$ (see \eqref{eq:linearSystem}), which would quickly accumulate errors when $\|A\|$ is greater than $1$ (mostly true in practice; for the presented example of a bridge $\|A\|=4.6347\cdot 10^8$), b) does not require to enlarging the reachable set equally in all dimensions to account for errors, which can cause large over-approximations, and c) can be applied to zonotopes, which use a compact generator representation \cite[Theorem~7]{Althoff2009b}. Our approach is implemented in CORA \cite{Althoff2015a} and will be publicly released with the new CORA version. 

\paragraph{Organization} The paper is organized as follows: Sec.~\ref{sec:preliminaries} presents preliminaries from the areas of Krylov subspace approximation, set representation, and reachability analysis. The computation of reachable sets in the Krylov subspace is presented step by step: The homogeneous solution is described in Sec.~\ref{sec:KrylovReachHomogeneous} and the input solution in Sec.~\ref{sec:KrylovReachParticular}. Then, these are combined in Sec.~\ref{sec:propagation} to demonstrate the overall algorithm. We close with numerical examples in Sec.~\ref{sec:numericalExperiment} and the conclusions in Sec.~\ref{sec:conclusions}.



\section{Preliminaries} \label{sec:preliminaries}

Let us first recall some important basics required for this work: Krylov subspace approximation, representation of continuous sets in high-dimensional spaces, and computation of reachable sets of linear systems with uncertain inputs. 

\subsection{Krylov Subspace Approximation}

The main obstacle towards reachability analysis of large-scale linear systems is the evaluation of $e^{C}v$, where $C \in \mathbb{R}^{n \times n}$ and $v \in \mathbb{R}^n$. To compute $e^{C}v$ more efficiently, we introduce the Krylov subspace
\begin{equation*}
 \mathcal{K}_\xi = \mathtt{span}(v, Cv, \ldots, C^{\xi-1}v),
\end{equation*}
where $\mathtt{span}(\cdot)$ returns the linear span of a set of vectors and $\xi$ denotes the dimension of the subspace. Several possibilities have been developed for approximating $e^{C}v$ in the Krylov subspace \cite{Saad2003,Saad1992,Hochbruck1997}. In this work, we use the simplest approach, which is also one of the most popular: the Arnoldi algorithm, as presented in Alg.~\ref{alg:Arnoldi} (see \cite[Sec.~2.1]{Saad1992} and \cite[Alg.~1]{Sidje1998}). Please note that $w^*$ in Alg.~\ref{alg:Arnoldi} denotes the complex conjugate of $w$ and $\|.\|$ returns the Euclidean norm. A further reason for choosing the Arnoldi algorithm is that tight a-posteriori and a-prior error bounds exist \cite{Jia2015,Wang2017}.

\begin{algorithm}
\caption{Arnoldi iteration} \label{alg:Arnoldi}
\begin{algorithmic}[1]
	\Require $C$, $v$, max order $\xi$, tolerance $\mathtt{tol}$
	\Ensure $H$, $V$
	\State $v\gen{1} = v/\|v\|$
        \For{$k = 1\ldots \xi$}
	  \State $w = C v\gen{k}$
	  \For{$j = 1\ldots k$}
	    \State $h_{j,k} = w^* v\gen{j}$ 
	    \State $w := w - h_{j,k} v\gen{j}$
	  \EndFor
	  \State $h_{k+1,k} = \|w\|$
	  \If{$h_{k+1,k} \leq \mathtt{tol} \, \|C\|$} 
	    \State \textbf{happy-breakdown} \EndIf
	  \State $v\gen{k+1} = w/h_{k+1,k}$
        \EndFor
        \State $V = [v\gen{1}, v\gen{2}, \ldots, v\gen{\xi}]$
\end{algorithmic}
\end{algorithm}

The results of the Arnoldi iteration (using the stabilized Gram-Schmidt process) are an orthogonal basis $V = [v\gen{1}, v\gen{2}, \ldots, v\gen{\xi}]$ of the Krylov subspace $\mathcal{K}_\xi$ and the upper $\xi \times \xi$ Hessenberg matrix $H$ consisting of the elements $h_{ij}$ from Alg.~\ref{alg:Arnoldi}. Please note that we are numbering vectors with superscripted numbers in parentheses in order to avoid confusion with powers. Using $H$, $V$, and $e_1 = [1, \, 0, \, 0, \, \ldots, 0]^T$, the evaluation of $e^{C}v$ can be approximated as \cite[eq.~3-4]{Saad1992}:
\begin{align} 
 e^{C}v \approx & \|v\| V e^{H} e_1, \label{eq:expApprox} \\
 e^{Ct}v \approx & \|v\| V e^{Ht} e_1. \label{eq:expApprox_time} 
\end{align}
The following rigorous a-priori bound for the approximation error exists \cite[eq.~29]{Saad1992}: 
\begin{equation*} 
 \big\|e^{Ct}v - \|v\| V e^{Ht} e_1\big\| \leq 2\|v\| \frac{\|Ct\|^\xi \, e^{\|Ct\|}}{\xi!}.
\end{equation*}
A tighter a-priori bound is proposed in \cite{Wang2017} for different types of matrices (skew-Hermitian matrices, positive definite matrices, etc.). Even tighter results are obtained by using a-posteriori bound. Although they are computationally more expensive, they make it possible to obtain error bounds for long prediction horizons so that one typically does not have to recompute the Arnoldi iteration; this discussion is detailed in Sec.~\ref{sec:computationalComplexity}. In this work, we use the a-posteriori bound from \cite[Eq.~4.1]{Jia2015} so that
\begin{equation} \label{eq:KrylovErrorBound}
 \forall t \in[0, t_f]: \big\|e^{Ct}v - \|v\| V e^{Ht} e_1\big\| \leq \|v\| \overline{\epsilon}_\textrm{norm} \, t,
\end{equation}
where the computation of $\overline{\epsilon}_\textrm{norm}$ is described in detail in Appendix~\ref{app:errorBound}.

\subsection{Set Representation by Zonotopes}

As already summarized in the introduction, several set representations for reachability analysis of linear systems have been proposed. When considering uncertain inputs, zonotopes \cite{Girard2005} and support functions \cite{Girard2008b} demonstrate good performance. Recently, \cite{Althoff2016c} showed that combining zonotopes and support functions provides even better benefits: zonotopes can compute the solution more efficiently, while support functions can represent more general initial sets. Since most initial sets are multidimensional intervals (special case of zonotopes), we use zonotopes and neglect their combination with support functions to focus on the novel aspects of this work.  

\begin{definition}[Zonotope] \label{def:zonotope}
Given a center $c\in\mathbb{R}^{n}$ and so-called generators $g\gen{i}\in\mathbb{R}^{n}$, a zonotope is defined as
\begin{equation*}
	\mathcal{Z} := \Big\{ x\in\mathbb{R}^n \Big| x = c + \sum_{i=1}^{p} \beta_i g\gen{i}, \beta_i \in [-1,1] \Big\}.
\end{equation*}
\end{definition}
We write in short $\mathcal{Z}=(c,g\gen{1},\ldots,g\gen{p})$ and define the order of a zonotope as $o := \frac{p}{n}$, where $p$ is the number of generators. A zonotope can be seen as the Minkowski addition of line segments $[-1,1]g\gen{i}$, which provides an idea of how a zonotope is constructed; see Fig. \ref{fig:zonotope}.

\begin{figure}[htb]	

  \footnotesize
  \psfrag{c}[c][c]{$c$}						
  \psfrag{d}[c][c]{$g\gen{1}$}	
  \psfrag{e}[c][c]{$g\gen{2}$}	
  \psfrag{f}[c][c]{$g\gen{3}$}	
  \psfrag{g}[l][c]{$c \oplus g\gen{1}$}	
  \psfrag{h}[l][c]{$c \oplus g\gen{1} \oplus g\gen{2}$}	
  \psfrag{i}[l][c]{$c \oplus g\gen{1} \oplus g\gen{2} \oplus g\gen{3}$}
  \psfrag{j}[l][c]{\shortstack{construction \\ direction}}
  \psfrag{p}[c][c]{"$\oplus$"}
    \centering		
    \includegraphics[width=\columnwidth]{./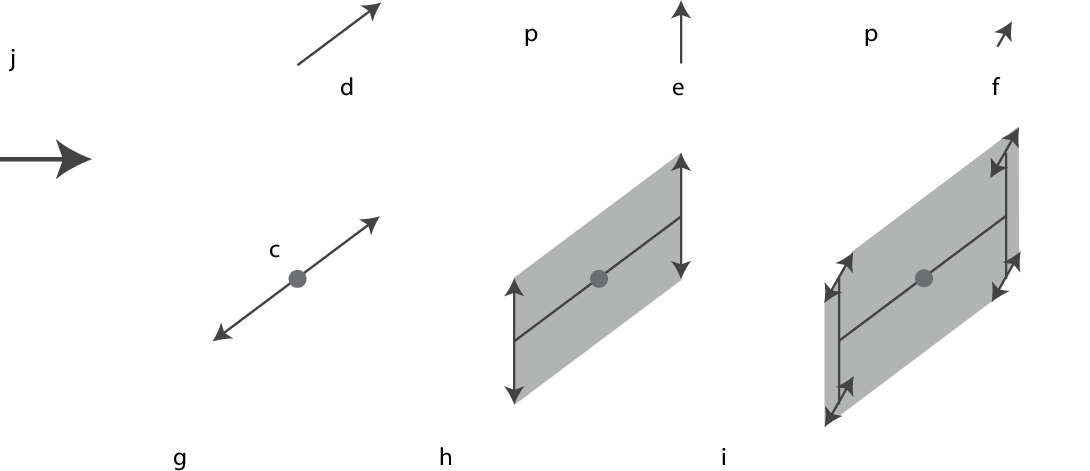}				
      \caption{Step-by-step construction of a two-dimensional zonotope.}
      \label{fig:zonotope}
\end{figure}

The iterative computation of reachable sets for linear systems requires set-based addition, which is often referred to as \textit{Minkowski addition} ($\mathcal{X} \oplus \mathcal{Y} :=  \{x+y | x\in \mathcal{X}, y\in \mathcal{Y} \}$), and set-based multiplication ($\mathcal{X} \otimes \mathcal{Y} :=  \{x \, y | x\in \mathcal{X}, y\in \mathcal{Y} \}$). Note that the symbol for set-based multiplication is often omitted for simplicity of notation, and that one or both operands can be singletons. The multiplication with a matrix $M\in \mathbb{R}^{o \times n}$ and the Minkowski addition of two zonotopes $\mathcal{Z}_1 = (c$, $g\gen{1}$, $\ldots$, $g\gen{p_1})$ and $\mathcal{Z}_2 = (d$, $h\gen{1}$, $\ldots$, $h\gen{p_2})$ are a direct consequence of the zonotope definition (see \cite{Kuehn1998}):
\begin{equation}\label{eq:zonoAdditionMultiplication}
\begin{split}
 \mathcal{Z}_1 \oplus \mathcal{Z}_2 & = (c + d, g\gen{1}, \ldots, g\gen{p_1}, h\gen{1}, \ldots, h\gen{p_2}), \\
 M\otimes \mathcal{Z}_1 & = (M \, c, M\, g\gen{1}, \ldots, M \, g\gen{p_1}).
\end{split}
\end{equation}
Also, the convex hull of $\mathcal{Z}_1$ and $\mathcal{Z}_2$ (both having equal order) is required (see \cite{Girard2005}):
\begin{equation}\label{eq:convexHull}
\begin{split}
 \mathtt{conv}(\mathcal{Z}_1,\mathcal{Z}_2) \subseteq & \frac{1}{2}( c+d, g\gen{1} + h\gen{1}, \ldots, g\gen{p_1} + h\gen{p_1}, \\
	    & \quad \, c-d, g\gen{1} - h\gen{1}, \ldots, g\gen{p_1} - h\gen{p_1}).
\end{split}
\end{equation}
For the multiplication of an interval matrix $\mathcal{M}$ with a zonotope, the matrix $\mathcal{M}$ is split into a real-valued matrix $M \in \mathbb{R}^{n \times n}$ and an interval matrix with bound $S \in \mathbb{R}^{n \times n}$, such that $\mathcal{M} = M \oplus [-S,S]$. After introducing $S_j$ as the $\th{j}$ row of $S$, the result is over-approximated as shown in \cite[Theorem~3.3]{Althoff2010a} by
\begin{equation*} 
\begin{split}
 	\mathcal{M}\mathcal{Z}_1 \subseteq & (M\mathcal{Z}_1 \oplus [-S,S]\mathcal{Z}_1) \\ \subseteq & (Mc_1,Mg\gen{1},\ldots,Mg\gen{p_1},h\gen{1},\ldots,h\gen{n}), \\
 	h\gen{i}_j= & \begin{cases} S_j(|c|+\sum_{k=1}^{p_1}|g|\gen{k}), \text{ for } i=j \\ 0, \text{ for } i\neq j \end{cases}.
\end{split}
\end{equation*}
Another important operation is the enclosure of a zonotope by an axis-aligned box (see \cite[Proposition~2.2]{Althoff2010a}):
\begin{equation} \label{eq:boxEnclosure}
\begin{split}
 \mathtt{box}(\mathcal{Z}) := & (c,\hat{h}\gen{1},\ldots,\hat{h}\gen{n}), \\
 \hat{h}\gen{i}_j= & \begin{cases} (\sum_{k=1}^{p_1}|g|\gen{k})_j, \text{ for } i=j \\ 0, \text{ for } i\neq j \end{cases}.
\end{split}
\end{equation}

We also require some new operations on zonotopes in the Krylov subspace, which are introduced later in Sec.~\ref{sec:KrylovReachHomogeneous}.

\subsection{Reachability Analysis} \label{sec:basicsReachabilityAnalysis}

Reachable set computations are typically performed iteratively for short time intervals 
\begin{equation} \label{eq:tau}
 \tau_k:=[t_k,t_{k+1}].
\end{equation}
In this work, constant-size time intervals $t_k := k\,\delta$ are used to focus on the main innovations, where $k\in\mathbb{N}$ is the time step and $\delta\in\mathbb{R}^+$ is referred to as the time increment. An extension to variable time increments is described in \cite{Frehse2011}. 

We recapitulate the reachability analysis of a linear differential inclusion 
\begin{equation} \label{eq:linearSystem}
 \dot{x} \in Ax(t) \oplus \mathcal{U},
\end{equation}
where $x\in\mathbb{R}^{n}$, $A\in\mathbb{R}^{n\times n}$, and $\mathcal{U}\subset\mathbb{R}^{n}$ is a set of uncertain inputs. Please note that this also includes the form $\dot{x} = Ax(t) + Bu(t)$, $u(t)\in \tilde{\mathcal{U}}$ often used in control theory, since one could easily choose $\mathcal{U} = B\tilde{\mathcal{U}}$. Let $\chi(t;x_0,u(\cdot))$ denote the solution to \eqref{eq:linearSystem} for an initial state $x(0)=x_0$ and the input trajectory $u(\cdot)$. For a set of initial states $\mathcal{X}_0 \subset \mathbb{R}^{n}$ and a set of possible input values $\mathcal{U}\subset \mathbb{R}^{m}$, the set of reachable states is
\begin{equation} \label{eq:defReachableSet}
\begin{split}
 \mathcal{R}^e([0,t_f]) := &\Big\{ \chi\left(t;x_0,u(\cdot)\right) \Big|
 x_0 \in \mathcal{R}(0), t\in[0,t_f], \\ 
&\hspace{60pt}\forall\tau\in[0,t] \, u(\tau)\in\mathcal{U} \Big\}.
\end{split}
\end{equation}
The superscript $e$ on $\mathcal{R}^e([0,t_f])$ denotes the exact reachable set, which cannot be computed for general linear systems \cite{Lafferriere2001}. Therefore, an over-approximation $\mathcal{R}([0,t_f])\supseteq \mathcal{R}^e([0,t_f])$ is computed as accurately as possible, while at the same time ensuring that the computations are efficient and scale well with the system dimension $n$.

For linear systems, the main task is to compute the reachable set of the first time interval $[0,\delta]$. Most of this paper will deal with computing the reachable set for the initial time interval since the propagation for later time intervals is rather simple, as shown later in Alg.~\ref{alg:reachsetLin}. We take advantage of the superposition principle of linear systems by computing the following reachable sets separately and later joining them together: the reachable set of the \underline{h}omogeneous solution $\mathcal{R}_h(t)$ ($u(\tau)=0$ in \eqref{eq:defReachableSet}), the reachable set of the \underline{p}articular solution $\mathcal{R}_p (t)$ due to the uncertain input $\mathcal{U}$ ($x_0=0$ in \eqref{eq:defReachableSet}), and the reachable set $\mathcal{R}_\epsilon$ correcting the initial assumption that trajectories are straight lines within $[0,\delta]$. According to \cite[Sec.~3.2]{Althoff2010a}, the reachable set for $[0,\delta]$ is computed as shown in Fig. \ref{fig:linReachOverview}:

\begin{enumerate}
 \item Starting from $\mathcal{X}_0$, compute the set of all homogeneous solutions $\mathcal{R}_h(\delta)$.
 \item Obtain the convex hull of $\mathcal{X}_0$ and $\mathcal{R}_h(\delta)$ to approximate the reachable set for the time interval $[0,\delta]$.
 \item Compute $\mathcal{R}([0,\delta]):=\bigcup_{t\in [0,\delta]} \mathcal{R}(t)$ by considering uncertain inputs by adding $\mathcal{R}_p(\delta)$ and accounting for the curvature of trajectories by adding $\mathcal{R}_\epsilon$.
\end{enumerate}

\begin{figure}[htb]
    \centering	
    \footnotesize
				
      \psfrag{a}[c][c]{ $\mathcal{R}_h(\delta)$}									
      \psfrag{b}[c][c]{ $\mathcal{X}_0$}			
      \psfrag{c}[c][c]{\shortstack{convex hull of \\
			$\mathcal{X}_0$, $\mathcal{R}_h(\delta)$}}
      \psfrag{d}[r][c]{ $\mathcal{R}([0,\delta])$}						
      \psfrag{e}[c][c]{\ding{192}}							
      \psfrag{f}[c][c]{\ding{193}}
      \psfrag{g}[c][c]{\ding{194}}						
      \psfrag{h}[l][c]{\shortstack{enlarge- \\ment}}					
      \includegraphics[width=\columnwidth]{./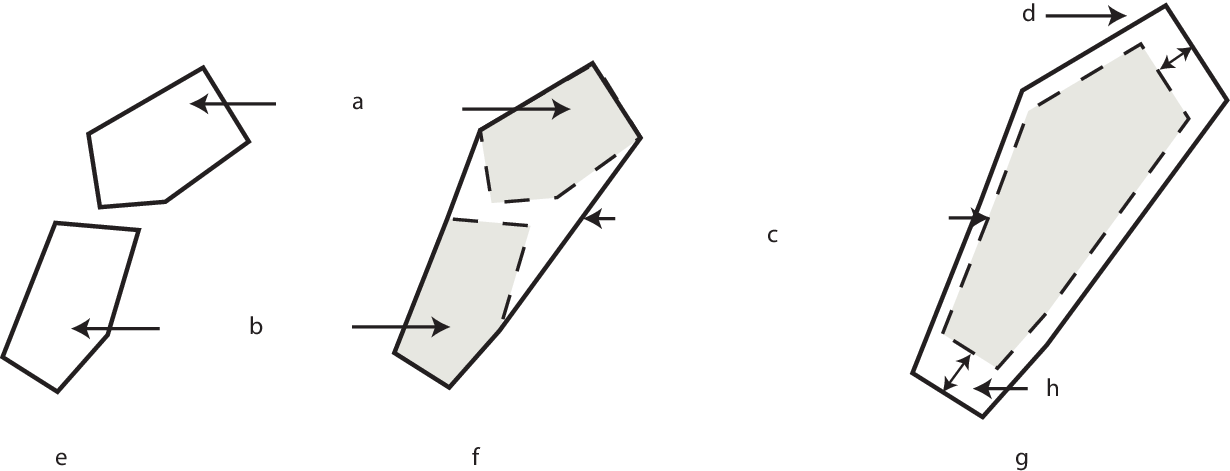}
      \caption{Steps for computing the reachable set of a linear system for the first time interval.}
      \label{fig:linReachOverview}
\end{figure}

\section{Homogeneous Solution in the Krylov Subspace} \label{sec:KrylovReachHomogeneous}

In this section, the basic idea of computing reachable sets as presented in Sec.~\ref{sec:basicsReachabilityAnalysis} is extended so that reachable sets can be computed in the Krylov subspace. We first present new techniques for the homogeneous solution of points in time and time intervals. Subsequently, we consider for the first time how reachable sets can be computed for arbitrarily changing input trajectories within the Krylov subspace.

\subsection{Solution for a Point in Time}

The well-known homogeneous solution of a linear time-invariant system with initial state $x_0$ is
\begin{equation*}
 x_h(t) = e^{At} x_0.
\end{equation*}
We can bound the exact solution using the lemma below. For that lemma and subsequent derivations we introduce $\mathbf{[-1,1]}^n$ as an $n$-dimensional vector whose entries are intervals $[-1,1]$. Analogously, we write $\mathbf{[-1,1]}^{n \times m}$ to represent an $n \times m$ matrix whose entries are intervals $[-1,1]$.
\begin{lemma}[Single state homogeneous solution] \label{thm:singleStateHomSol}
After obtaining $V$ and $H$ from Alg.~\ref{alg:Arnoldi} with inputs $C=A$, $v=x_0$, we can bound the homogeneous solution $x_h(t) = e^{At} x_0$ by
 \begin{equation} \label{eq:initialStateSolution}
\begin{split}
 x_h(t) & \in \|x_0\| V e^{H \, t} e_1 \oplus \mathbf{[-1,1]}^n \|x_0 \| \epsilon_\textrm{norm} \, t\\
 & = \hat{x}_h (t) \oplus \mathcal{E}_\mathtt{red}(t,x_0),
\end{split}
\end{equation}
where
\begin{equation} \label{eq:approxSolutionAndError}
\begin{split}
 \hat{x}_h (t) &:= \|x_0\| V e^{H \, t} e_1 \\ 
 \mathcal{E}_\mathtt{red}(t,x_0) &= \mathbf{[-1,1]}^n \|x_0 \| \epsilon_\textrm{norm} \, t
\end{split}
\end{equation}
and $\epsilon_\textrm{norm}$ is computed as described in Appendix~\ref{app:errorBound}.
\end{lemma}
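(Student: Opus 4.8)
The plan is to use the Krylov error bound \eqref{eq:KrylovErrorBound} as a black box, with the small twist that here we need a bound that is valid for all $t$ simultaneously — or at least scales linearly in $t$ — rather than just for the single evaluation $e^{C}v$ at $t=1$. The observation that makes this work is that applying the Arnoldi algorithm to $C=A$ and $v=x_0$ produces $V$ and $H$ that do \emph{not} depend on $t$; the Krylov subspace $\mathcal{K}_\xi(A,x_0)$ is the same as $\mathcal{K}_\xi(At,x_0)$ for any fixed $t>0$, and the Hessenberg reduction transforms accordingly, so $e^{At}x_0$ can be approximated by $\|x_0\| V e^{Ht} e_1$ with $H$ the \emph{same} matrix returned by Alg.~\ref{alg:Arnoldi}. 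This is the standard ``scaled'' use of the Arnoldi approximation to $e^{A}x_0$ at varying $t$, and it is exactly the form used in \eqref{eq:expApprox} specialized with the replacement $C\mapsto At$, $H\mapsto Ht$.

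First I would state precisely which error bound from Appendix~\ref{app:errorBound} / \cite{Wang2016} is being invoked, and record that it has been engineered (or can be applied with $C\mapsto At$) so that the normalized error obeys
\begin{equation*}
 \big\| e^{At} x_0 - \|x_0\| V e^{Ht} e_1 \big\| \le \|x_0\|\, \epsilon_\textrm{norm}\, t
\end{equation*}
for all $t$ in the time horizon of interest, where $\epsilon_\textrm{norm}$ is the (time-independent) constant computed in the appendix. Concretely this is a one-line consequence of \eqref{eq:KrylovErrorBound} applied to the matrix $At$: the bounds in \cite{Wang2016} for the relevant matrix classes are monotone in (and, in the regime used, linear in) the scaling parameter, which is why the factor $t$ can be pulled out; the appendix is where that monotonicity/linearity is spelled out, so in the body of the proof I would simply cite it.

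Second, I would translate the norm bound into the set inclusion \eqref{eq:initialStateSolution}. Let $r := \|x_0\|\,\epsilon_\textrm{norm}\, t \ge 0$ denote the error radius. The inequality above says that the error vector $e(t) := e^{At}x_0 - \hat{x}_h(t)$ satisfies $\|e(t)\|_2 \le r$, hence in particular $\|e(t)\|_\infty \le r$, i.e. $e(t) \in r\cdot\mathbf{[-1,1]}^n = \mathbf{[-1,1]}^n\,\|x_0\|\,\epsilon_\textrm{norm}\, t = \mathcal{E}_\mathtt{red}(t,x_0)$. Therefore $x_h(t) = \hat{x}_h(t) + e(t) \in \hat{x}_h(t) \oplus \mathcal{E}_\mathtt{red}(t,x_0)$, which is the claim; the second line of \eqref{eq:initialStateSolution} is just the notational substitution from \eqref{eq:approxSolutionAndError}. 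A one-sentence remark that over-approximating the Euclidean ball by the axis-aligned box $\mathbf{[-1,1]}^n r$ is what links the analytic bound to the zonotope representation is worth including, since that box/zonotope packaging is used throughout the rest of the paper.

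The only genuine obstacle is the first step: justifying that the scalar Krylov bound extends to a bound that is (at most) linear in $t$ uniformly over the time interval, rather than being a bound only at a single time. If the bound from \cite{Wang2016} being used is not literally linear in the scaling, one falls back on a slightly weaker but still $O(t)$ estimate valid on $[0,\delta]$ (or on $t\le T_{\max}$), which is all that the reachability iteration in Alg.~\ref{alg:reachsetLin} actually needs; everything after that — the $\ell_2 \Rightarrow \ell_\infty$ step and the rewriting as a Minkowski sum — is routine. Since the excerpt explicitly defers the computation of $\epsilon_\textrm{norm}$ to Appendix~\ref{app:errorBound}, I expect the proof in the paper to be correspondingly short, essentially the two translation steps above together with a pointer to that appendix.
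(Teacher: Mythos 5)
Your proposal is correct and follows essentially the same route as the paper's proof: invoke the Krylov error bound \eqref{eq:KrylovErrorBound} with the factor $t$ pulled out (justified by citing \cite[Corollary~5.3]{Wang2016}, exactly as the paper does), then convert the Euclidean norm bound into the box inclusion via $\|a\|\leq b \Rightarrow a\in\mathbf{[-1,1]}^n b$. Your extra care in flagging the linearity-in-$t$ of the error bound as the one nontrivial step is a fair observation, but the paper handles it the same way you do, by deferring to the appendix and the cited corollary.
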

\begin{proof}
 The lemma directly follows from enlarging the approximate solution in \eqref{eq:expApprox} by the error $\|x_0 \| \epsilon_\textrm{norm}\, t$ from \eqref{eq:KrylovErrorBound}. Since we have for a vector $a \in \mathbb{R}^n$ and a scalar $b \in \mathbb{R}$ that $\|a\| < b \Rightarrow a \in \mathbf{[-1,1]}^n b$, one obtains from \eqref{eq:KrylovErrorBound} that
 \begin{equation*} 
  e^{A\, t}x_0 - \|x_0\| V e^{H \, t} e_1 \in \mathbf{[-1,1]}^n \|x_0\| \epsilon_\textrm{norm} \, t,
 \end{equation*}
 which proves the lemma. 
\end{proof}

For reachability analysis, one has to compute the homogeneous solution for a set of initial states. Replacing the single initial state in Lemma~\ref{thm:singleStateHomSol} by a set of initial states $\mathcal{X}_0$ is not trivial since the matrices $V$ and $H$ depend on each initial state $x_0 \in \mathcal{X}_0$. To indicate this dependency, we write $V(x_0)$ and $H(x_0)$ from now on to stress that those matrices have been obtained from the state $x_0$. Since $\mathcal{X}_0$ is represented as a zonotope in this work, the homogeneous reachable set can be computed by the following theorem:

\begin{theorem}[Homogeneous solution for a point in time] \label{thm:homSolution}
The reachable set of the homogeneous solution $\mathcal{R}_h(t) := e^{A\, t} \mathcal{X}_0$ for the initial zonotope $\mathcal{X}_0 = (c,g\gen{1},\ldots,g\gen{p})$ can be over-approximated by the zonotope
\begin{equation*}
\begin{split}
  \mathcal{R}_h(t) & \subseteq (\hat{c}, \hat{g}\gen{1},\ldots,\hat{g}\gen{p}) \oplus \mathcal{R}_{h,err}, \\
  \hat{c} & = \|c \| V(c) e^{H(c) \, t} \, e_1, \\
  \hat{g}\gen{i} & = \|g\gen{i} \| V(g\gen{i}) e^{H(g\gen{i}) \, t} \, e_1, \\
  \mathcal{R}_{h,err} & = \mathbf{[-1,1]}^n \Big(\|c \| + \sum_{i=1}^p \|g\gen{i} \|\Big) \epsilon_\textrm{norm} \, t.
\end{split}
\end{equation*}
\end{theorem}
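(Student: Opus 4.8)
The plan is to reduce the theorem to Lemma~\ref{thm:singleStateHomSol} by exploiting the linearity of the map $x_0 \mapsto e^{A\,t} x_0$ together with the generator representation of the zonotope $\mathcal{X}_0$. First I would write, for an arbitrary point $x \in \mathcal{X}_0$, that $x = c + \sum_{i=1}^p \beta_i g\^{i}$ with $\beta_i \in [-1,1]$, and hence by linearity $e^{A\,t} x = e^{A\,t} c + \sum_{i=1}^p \beta_i \, e^{A\,t} g\^{i}$. Each of the $p+1$ terms $e^{A\,t} c$ and $e^{A\,t} g\^{i}$ is then bounded individually using Lemma~\ref{thm:singleStateHomSol} applied with the respective vector as the initial state: $e^{A\,t} c \in \hat{c} \oplus \mathcal{E}_\mathtt{red}(t,c)$ and $e^{A\,t} g\^{i} \in \hat{g}\^{i} \oplus \mathcal{E}_\mathtt{red}(t,g\^{i})$, where $\hat{c} = \|c\| V(c) e^{H(c)\,t} e_1$ and $\hat{g}\^{i} = \|g\^{i}\| V(g\^{i}) e^{H(g\^{i})\,t} e_1$ exactly match the claimed formulas.

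Next I would assemble the overall enclosure via Minkowski addition. Since $e^{A\,t} x$ lies in the Minkowski sum of $\hat{c} \oplus \mathcal{E}_\mathtt{red}(t,c)$ and the scaled sets $\beta_i(\hat{g}\^{i} \oplus \mathcal{E}_\mathtt{red}(t,g\^{i}))$ over all admissible $\beta_i$, and the union over $\beta_i \in [-1,1]$ of $\beta_i \, \hat{g}\^{i}$ is precisely the generator line segment $[-1,1]\hat{g}\^{i}$, the "center plus generator" part collects into the zonotope $(\hat{c},\hat{g}\^{1},\ldots,\hat{g}\^{p})$. It remains to bound the accumulated error sets. Here I would use two facts: $\beta_i \, \mathbf{[-1,1]}^n = \mathbf{[-1,1]}^n |\beta_i| \subseteq \mathbf{[-1,1]}^n$ for $\beta_i \in [-1,1]$, so each scaled error term $\beta_i \mathcal{E}_\mathtt{red}(t,g\^{i})$ is contained in $\mathbf{[-1,1]}^n \|g\^{i}\| \epsilon_\textrm{norm}\, t$; and the Minkowski sum of interval boxes is additive, i.e. $\mathbf{[-1,1]}^n a_1 \oplus \cdots \oplus \mathbf{[-1,1]}^n a_m = \mathbf{[-1,1]}^n (a_1 + \cdots + a_m)$ for nonnegative scalars $a_j$. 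Summing the error contribution from $c$ and from all $g\^{i}$ then yields exactly $\mathcal{R}_{h,err} = \mathbf{[-1,1]}^n \big(\|c\| + \sum_{i=1}^p \|g\^{i}\|\big)\epsilon_\textrm{norm}\, t$, and the two pieces combine by Minkowski addition into the stated over-approximation.

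The main subtlety — and the step I would be most careful about — is the over-approximation incurred when pulling the $\beta_i$ factors out of the error sets: replacing $|\beta_i|$ by its worst-case value $1$ is what makes the final error independent of the particular point, but it must be justified that this is sound (it is, since we only enlarge the set). A second point worth stating explicitly is that the nondeterministic choices of $\beta_i$ in the generator part and in the error part are treated independently, which is again conservative but sound; this is why the result is an over-approximation ($\subseteq$) rather than an equality. Everything else is a routine application of the zonotope arithmetic recalled in \eqref{eq:zonoAdditionMultiplication} and the bound from Lemma~\ref{thm:singleStateHomSol}.
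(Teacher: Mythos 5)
Your proposal is correct and follows essentially the same route as the paper: decompose the zonotope into its center and generators, apply Lemma~\ref{thm:singleStateHomSol} to each of the $p+1$ vectors, and absorb the scaled error boxes (using $|\beta_i|\leq 1$) into a single interval vector whose radius is the sum $\big(\|c\|+\sum_i\|g\^{i}\|\big)\epsilon_\textrm{norm}\,t$. The paper phrases this set-wise via \eqref{eq:zonoAdditionMultiplication} rather than pointwise with explicit $\beta_i$, but the argument is the same.
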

\begin{proof}
 Inserting the definition of a zonotope (Def.~\ref{def:zonotope}) into $\mathcal{R}_h(t) = e^{A\, t} \mathcal{X}_0$ and using \eqref{eq:zonoAdditionMultiplication}, we obtain
 \begin{equation*}
  \mathcal{R}_h(t) = e^{A\, t} c \oplus \bigoplus_{i=1}^p [-1,1] e^{A\, t} g\gen{i}.
 \end{equation*}
 Using Lemma~\ref{thm:singleStateHomSol} yields
  \begin{equation*}
  \begin{split}
   \mathcal{R}_h(t) \subseteq & \|c\| V(c) e^{H(c) \, t} e_1 \oplus \mathbf{[-1,1]}^n \|c \| \epsilon_\textrm{norm} \, t \\
   & \oplus \bigoplus_{i=1}^p \Big([-1,1] \|g\gen{i}\| V(g\gen{i}) e^{H(g\gen{i})\, t} e_1 \\
   & \oplus \mathbf{[-1,1]}^n \|g\gen{i} \| \epsilon_\textrm{norm} \, t \Big) \\
   =&  \|c\| V(c) e^{H(c) \, t} e_1 \\
   & \oplus \bigoplus_{i=1}^p [-1,1] \|g\gen{i}\| V(g\gen{i}) e^{H(g\gen{i}) \, t} e_1 \\
   & \oplus \underbrace{\mathbf{[-1,1]}^n \Big(\|c \| + \sum_{i=1}^p \|g\gen{i} \|\Big) \epsilon_\textrm{norm} \, t}_{= \mathcal{R}_{h,err}}.
  \end{split}
 \end{equation*}
 This results in the zonotope of the theorem. Since $\mathcal{R}_{h,err}$ is a multidimensional interval, and thus also a zonotope, the addition of the two zonotopes $\mathcal{R}_{h,err}$ and $(\hat{c}, \hat{g}\gen{1},\ldots,\hat{g}\gen{p})$ results in the zonotope $\mathcal{R}_h(t)$.
\end{proof}

We propose two different representations of $\mathcal{R}_{h,err}$. The first one uses the equivalence
\begin{equation} \label{eq:intervalVectorEquivalence}
 \begin{split}
 \mathbf{[-1,1]}^n \underbrace{\Big(\|c \| + \sum_{i=1}^p \|g\gen{i} \|\Big) \epsilon_\textrm{norm} \, t}_{=: \hat{\epsilon}_\textrm{norm} \, t} = \bigoplus_{i=1}^n [-1,1] e_i \, \hat{\epsilon}_\textrm{norm} \, t,
 \end{split}
\end{equation}
where $e_i$ is a unit vector with the $i^{th}$ element being $1$ and all others $0$. Let us also introduce $\mathbf{0}^n$ and $\mathbf{1}^n$ as an n-dimensional vector of zeros and ones, respectively. Using \eqref{eq:intervalVectorEquivalence}, we can write $\mathcal{R}_{h,err} = (\mathbf{0}^n, \hat{h}\gen{1},\ldots,\hat{h}\gen{n})$ with
\begin{equation*}
 \hat{h}\gen{i}_j= \begin{cases} \hat{\epsilon}_\textrm{norm} \, t, \text{ for } i=j \\ 0, \text{ for } i\neq j \end{cases},
\end{equation*}
which makes it possible to obtain (see Theorem~\ref{thm:homSolution})
\begin{equation*}
\begin{split}
 \mathcal{R}_h(t) \subseteq & (\hat{c}, \hat{g}\gen{1},\ldots,\hat{g}\gen{p}) \oplus \mathcal{R}_{h,err} \\
 = & (\hat{c}, \hat{g}\gen{1},\ldots,\hat{g}\gen{p},\hat{h}\gen{1},\ldots,\hat{h}\gen{n}).
\end{split}
\end{equation*}
To avoid adding new generators $\hat{h}\gen{1},\ldots,\hat{h}\gen{n}$, one can also bound $\mathcal{R}_{h,err}$ by an interval vector as presented in the next lemma. 

\begin{lemma}[Interval vector multiplication]
 Using $[\underline{b}, \overline{b}] = b_c \oplus [-b_\Delta, b_\Delta]$ ($b_\Delta \geq 0$) we can provide the bound
\begin{equation*}
\begin{split}
 e^{A\,t}[\underline{b}, \overline{b}] \subseteq& \|b_c\| V(b_c) e^{H(b_c)\, t} e_1 \oplus [-\mu, \mu],
\end{split}
\end{equation*}
where
\begin{equation*}
 \mu = \|b_\Delta\|\bar{V}(b_\Delta) e^{\bar{H}(b_\Delta)\, t} e_1 + \mathbf{1}^n (\|b_\Delta\|\bar{\epsilon}_\textrm{norm} \, t + \|b_c \|\epsilon_\textrm{norm} \, t)
\end{equation*}
and $\bar{V}$, $\bar{H}$ are obtained from Alg.~\ref{alg:Arnoldi} and $\bar{\epsilon}_\textrm{norm} \, t$ from Appendix~\ref{app:errorBound} using $C=|A|$. Please note that the absolute values are computed elementwise, i.e., $|A|_{i,j} = |A_{i,j}|$.
\end{lemma}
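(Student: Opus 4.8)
The plan is to exploit the linearity of the map $\beta \mapsto e^{At}\beta$: split the interval vector into its center and its symmetric half-width, bound each part separately, and recombine the two enclosures by Minkowski addition. Writing $[\underline{b},\overline{b}] = b_c \oplus [-b_\Delta, b_\Delta]$ with $b_\Delta \ge 0$ (so $|b_\Delta| = b_\Delta$), we have $e^{At}[\underline{b},\overline{b}] = e^{At} b_c \oplus e^{At}[-b_\Delta, b_\Delta]$, so it suffices to enclose the two summands.

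For the center $e^{At} b_c$, I would simply invoke Lemma~\ref{thm:singleStateHomSol} with $C = A$ and $x_0 = b_c$, yielding $e^{At}b_c \in \|b_c\| V(b_c) e^{H(b_c) t} e_1 \oplus \mathbf{[-1,1]}^n \|b_c\|\epsilon_\textrm{norm} t$. Rewriting $\mathbf{[-1,1]}^n \|b_c\|\epsilon_\textrm{norm} t = [-\mathbf{1}^n \|b_c\|\epsilon_\textrm{norm} t,\; \mathbf{1}^n\|b_c\|\epsilon_\textrm{norm} t]$ already produces the leading term $\|b_c\| V(b_c) e^{H(b_c) t} e_1$ of the claimed enclosure together with the $\|b_c\|\epsilon_\textrm{norm} t$ contribution to $\mu$.

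The core of the argument is the half-width term. The key observation is the elementwise matrix inequality $|e^{At}| \le e^{|A|t}$, valid for $t\ge 0$: comparing the two exponentials term by term in their Taylor series and using $|A^k| \le |A|^k$ elementwise (repeated triangle inequality) together with the triangle inequality for the series gives $|(e^{At})_{ij}| \le \sum_{k\ge 0} \frac{t^k}{k!}(|A|^k)_{ij} = (e^{|A|t})_{ij}$. Consequently, for any $\beta$ with $|\beta| \le b_\Delta$ (elementwise) we get $|e^{At}\beta| \le |e^{At}|\,|\beta| \le e^{|A|t} b_\Delta$, hence $e^{At}[-b_\Delta, b_\Delta] \subseteq [-e^{|A|t} b_\Delta,\; e^{|A|t} b_\Delta]$. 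It then remains to bound the nonnegative vector $e^{|A|t} b_\Delta$ from above, which I would do by applying Lemma~\ref{thm:singleStateHomSol} once more, now with $C = |A|$ and $x_0 = |b_\Delta|$; since $e^{|A|t}$ has the form $e^{Ct}$, the error scales linearly in $t$ exactly as in Lemma~\ref{thm:singleStateHomSol} (cf.\ \cite[Corollary~5.3]{Wang2016}), giving $e^{|A|t} b_\Delta \in \|b_\Delta\| \bar{V}(|b_\Delta|) e^{\bar{H}(|b_\Delta|) t} e_1 \oplus \mathbf{[-1,1]}^n \|b_\Delta\|\bar{\epsilon}_\textrm{norm} t$, so componentwise $(e^{|A|t} b_\Delta)_j \le \big(\|b_\Delta\| \bar{V}(|b_\Delta|) e^{\bar{H}(|b_\Delta|) t} e_1\big)_j + \|b_\Delta\|\bar{\epsilon}_\textrm{norm} t$. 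Therefore $e^{At}[-b_\Delta, b_\Delta] \subseteq [-\mu', \mu']$ with $\mu' = \|b_\Delta\|\bar{V}(|b_\Delta|) e^{\bar{H}(|b_\Delta|) t} e_1 + \mathbf{1}^n\|b_\Delta\|\bar{\epsilon}_\textrm{norm} t$.

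Adding the two interval boxes to the center enclosure gives $e^{At}[\underline b, \overline b] \subseteq \|b_c\| V(b_c) e^{H(b_c) t} e_1 \oplus [-\mu, \mu]$ with $\mu = \mu' + \mathbf{1}^n\|b_c\|\epsilon_\textrm{norm} t$, which is precisely the claimed expression. I expect the main obstacle to be the careful handling of signs in the half-width step: one must argue that the elementwise bound $|e^{At}\beta| \le e^{|A|t} b_\Delta$ already yields a valid symmetric box, even though the approximant $\bar{V}(|b_\Delta|) e^{\bar{H}(|b_\Delta|) t} e_1$ need not be entrywise nonnegative — the a posteriori bound \eqref{eq:KrylovErrorBound} still controls the signed difference $e^{|A|t}b_\Delta - \|b_\Delta\|\bar{V}(|b_\Delta|) e^{\bar{H}(|b_\Delta|) t} e_1$, which is all that is needed. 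A minor point is that the argument uses $t \ge 0$, which always holds since $t$ ranges over $\tau_k = [t_k, t_{k+1}]$ with $t_k \ge 0$.
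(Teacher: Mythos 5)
Your proof is correct and follows essentially the same route as the paper's: both split the interval into center plus symmetric half-width, bound the half-width term via the elementwise Taylor-series inequality $|e^{At}| \leq e^{|A|t}$, and then apply Lemma~\ref{thm:singleStateHomSol} twice (with $C=A$, $v=b_c$ and with $C=|A|$, $v=|b_\Delta|$) before merging the error boxes into $[-\mu,\mu]$. Your added remark on why the possibly sign-indefinite Krylov approximant of $e^{|A|t}b_\Delta$ still yields a valid symmetric enclosure is a point the paper leaves implicit, but it does not change the argument.
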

\begin{proof}
We first show that $e^{A\,t}[\underline{b}, \overline{b}]$ can be over-approximated by 
\begin{equation*}
\begin{split}
 e^{A\,t}[\underline{b}, \overline{b}] =& e^{A\,t}(b_c \oplus [-b_\Delta, b_\Delta]) \\
			       \subseteq& e^{A\,t} b_c \oplus e^{A\,t}[-b_\Delta, b_\Delta] \\
			       \subseteq& e^{A\,t} b_c \oplus [-|e^{A\,t}|b_\Delta, |e^{A\,t}|b_\Delta] \\
			       \subseteq& e^{A\,t} b_c \oplus [-e^{|A|t}b_\Delta, e^{|A|t}b_\Delta],
\end{split}
\end{equation*}
where the over-approximation achieved in the last line directly follows from the Taylor series of $e^{A\,t}$ and $A_{i,j} \leq C_{i,j}$. Using the above result and Lemma~\ref{thm:singleStateHomSol}, we obtain
\begin{equation} \label{eq:intervalComputation}
\begin{split}
 & e^{A\,t} b_c \oplus [-e^{|A|t}b_\Delta, e^{|A|t}b_\Delta] \\
 =& \|b_c\| V(b_c) e^{H(b_c)\,t} e_1 \oplus \mathbf{[-1,1]}^n \|b_c \| \epsilon_\textrm{norm} \, t \oplus [-\gamma, \gamma],
\end{split}
\end{equation}
where
\begin{equation*}
 \gamma = \|b_\Delta\|\bar{V}(b_\Delta) e^{\bar{H}(b_\Delta)\,t} e_1 + \mathbf{1}^n \| b_\Delta \| \bar{\epsilon}_\textrm{norm} \, t.
\end{equation*}
The result in \eqref{eq:intervalComputation} can be simplified to 
\begin{equation*}
\begin{split}
 \|b_c\|V(b_c) e^{H(b_c)\,t} e_1 \oplus [-\mu, \mu],
\end{split}
\end{equation*}
with $\mu$ as in the lemma.
\end{proof}

\subsection{Solution for a Time Interval}

In the previous subsection, we over-approximated the homogeneous solution for points in time. This subsection over-approximates $x_h (t)$ for a time interval $[0,\delta]$. Since $x_h (t) \in \hat{x}_h(t) \oplus \mathcal{E}_\mathtt{red}(t,x_0)$ as shown in \eqref{eq:initialStateSolution}, where $\mathcal{E}_\mathtt{red}(t,x_0)$ is monotonically increasing according to \eqref{eq:approxSolutionAndError}, we have that $\forall t\in[0,\delta]: \, \mathcal{E}_\mathtt{red}(t,x_0) \subseteq \mathcal{E}_\mathtt{red}(\delta,x_0)$. It remains to approximate $\hat{x}_h(t)$ within time intervals:
\begin{equation*}
\begin{split}
  \forall t\in[0,\delta]: \quad \hat{x}_h (t) \approx x_0+\frac{t}{\delta}(\hat{x}_h(\delta)-x_0).
\end{split}
\end{equation*}
To consider the error of this approximation, we use a finite Taylor expansion of the exponential matrix of $\eta^{\text{th}}$ order with error matrix $E$ (see \cite[Prop.~2]{Althoff2011a}):
\begin{equation}\label{eq:taylorSeries}
\begin{split}
	e^{H(x_0) \, t} =& \sum_{i=0}^{\eta} \frac{1}{i!}(H(x_0) \, t)^i + E(t, x_0),
\end{split}
\end{equation}
where $E$ is enclosed by an interval matrix:
\begin{gather}\label{eq:matrixExponentialRemainder}
 E(t, x_0) \in \mathcal{E}(t, x_0)  = [-W(t, x_0),W(t, x_0)], \\
 W(t, x_0)= \sum_{i=\eta+1}^{\infty} \frac{t^i}{i!}|H(x_0)|^i = e^{|H(x_0)|t} - \sum_{i=0}^{\eta} \frac{t^i}{i!}|H(x_0)|^i.\notag
\end{gather}
We are proposing an enclosure of the error $x_h (t) - \hat{x}_h (t)$, which is multiplicative with the initial state, since large initial states result in larger errors for a given time horizon.
\begin{lemma}[Correction interval matrix $\mathcal{F}$]\label{thm:correctionMatrix}
The interval matrix 
\begin{equation*}
	\mathcal{F}(x_0) = \sum_{i=2}^{\eta} [(i^{\frac{-i}{i-1}}-i^{\frac{-1}{i-1}})\delta^i,0]\frac{H^i(x_0)}{i!} \oplus \mathcal{E}(\delta,x_0),
\end{equation*}
with $\mathcal{E}(\delta,x_0)$ and $\eta$ according to \eqref{eq:matrixExponentialRemainder} ensures the enclosure of the exact solution:
\begin{equation} \label{eq:timeIntervalEnlargement}
\begin{split}
  \forall t\in[0,\delta]: \quad \hat{x}_h (t) \, \in \, & x_0+\frac{t}{\delta}(\hat{x}_h(\delta)-x_0) \\
  & \oplus \|x_0\| V(x_0) \mathcal{F}(x_0) e_1.
\end{split}
\end{equation}
\end{lemma}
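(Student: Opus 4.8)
The plan is to follow the structure of the classical result in \cite[eq.~(3.2)--(3.3)]{Althoff2010a}, but to carry it out inside the Krylov subspace rather than in the full state space. Recall that $\hat{x}_h(t) = \|x_0\| V(x_0) e^{H(x_0)\,t} e_1$ and $\hat{x}_h(\delta) = \|x_0\| V(x_0) e^{H(x_0)\,\delta} e_1$, while $x_0 = \|x_0\| V(x_0) e_1$ because $v^{(1)} = x_0/\|x_0\|$ is the first column of $V(x_0)$. Thus all three vectors appearing in \eqref{eq:timeIntervalEnlargement} are of the form $\|x_0\| V(x_0)(\,\cdot\,) e_1$, and after factoring out the common left multiplier $\|x_0\| V(x_0)$ it suffices to prove the subspace-level inclusion
\begin{equation*}
 \forall t\in[0,\delta]: \quad e^{H(x_0)\,t} e_1 \,\in\, e_1 + \frac{t}{\delta}\bigl(e^{H(x_0)\,\delta} e_1 - e_1\bigr) \oplus \mathcal{F}(x_0)\, e_1,
\end{equation*}
and then left-multiply the whole inclusion by $\|x_0\| V(x_0)$, which is legitimate since $V(x_0)$ is a constant (state-dependent but $t$-independent) matrix and multiplication of a set by a matrix is as in \eqref{eq:zonoAdditionMultiplication}.

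To prove the subspace inclusion, I would write $e^{H t} e_1 - e_1 - \frac{t}{\delta}(e^{H\delta}e_1 - e_1)$ (suppressing the argument $x_0$) and insert the finite Taylor expansion \eqref{eq:taylorSeries} for both $e^{Ht}$ and $e^{H\delta}$. The zeroth-order terms $e_1$ cancel exactly, and the first-order terms cancel by design of the linear interpolation: $H t\, e_1 - \frac{t}{\delta} H \delta\, e_1 = 0$. What remains from the polynomial part is $\sum_{i=2}^{\eta} \frac{1}{i!}\bigl(t^i - \frac{t}{\delta}\delta^i\bigr) H^i e_1 = \sum_{i=2}^{\eta} \frac{1}{i!}\bigl(t^i - t\,\delta^{i-1}\bigr) H^i e_1$, plus the remainder contribution $E(t,x_0) e_1 - \frac{t}{\delta} E(\delta,x_0) e_1$. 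For the polynomial part I would bound the scalar coefficient $t^i - t\,\delta^{i-1}$ over $t\in[0,\delta]$: it is nonpositive on $[0,\delta]$, vanishes at the endpoints, and its minimum is attained at $t^* = (i^{-1}\delta^{i-1})^{1/(i-1)} = i^{-1/(i-1)}\delta$, giving minimum value $(i^{-i/(i-1)} - i^{-1/(i-1)})\delta^i$; hence $t^i - t\,\delta^{i-1} \in [(i^{-i/(i-1)}-i^{-1/(i-1)})\delta^i,\,0]$, which is exactly the scalar interval multiplying $H^i/i!$ in the definition of $\mathcal{F}(x_0)$. This shows the polynomial part lies in $\bigl(\sum_{i=2}^{\eta} [(i^{-i/(i-1)}-i^{-1/(i-1)})\delta^i,0]\frac{H^i}{i!}\bigr) e_1$.

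For the remainder contribution, since $E(t,x_0) \in \mathcal{E}(t,x_0)$ with $\Phi(t,x_0)$ monotonically increasing in $t$ (as it is a positive power of $t$ divided by a constant, at least while $\epsilon<1$), we have $\mathcal{E}(t,x_0) \subseteq \mathcal{E}(\delta,x_0)$ for all $t\in[0,\delta]$, and likewise $\frac{t}{\delta}\mathcal{E}(\delta,x_0) \subseteq \mathcal{E}(\delta,x_0)$ since $t/\delta \in [0,1]$ and $\mathcal{E}(\delta,x_0)$ is a symmetric interval matrix. Because $\mathcal{E}(\delta,x_0)$ is symmetric about zero, the Minkowski difference of the two terms is still contained in $\mathcal{E}(\delta,x_0)$, so $E(t,x_0)e_1 - \frac{t}{\delta}E(\delta,x_0)e_1 \in \mathcal{E}(\delta,x_0)\,e_1$. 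Adding this to the polynomial enclosure gives membership in $\mathcal{F}(x_0)\,e_1$, completing the subspace inclusion; left-multiplication by $\|x_0\| V(x_0)$ then yields \eqref{eq:timeIntervalEnlargement}.

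The main obstacle I anticipate is the bookkeeping around the interval-matrix arithmetic: one has to be careful that "$\oplus$" between the polynomial interval-matrix sum and $\mathcal{E}(\delta,x_0)$, followed by the action on $e_1$, distributes correctly, and that the combination $E(t,x_0) - \frac{t}{\delta}E(\delta,x_0)$ is genuinely enclosed in a single copy of $\mathcal{E}(\delta,x_0)$ rather than two (this is where symmetry of the interval matrix is essential). Everything else — the cancellation of the constant and linear terms, and the one-variable calculus to locate the minimum of $t^i - t\,\delta^{i-1}$ — is routine. A minor point worth stating explicitly is that the inclusion is proved pointwise in $t$ but the right-hand side of \eqref{eq:timeIntervalEnlargement} is $t$-dependent only through the affine interpolation term, so the claim "$\forall t\in[0,\delta]$" is exactly what the argument delivers.
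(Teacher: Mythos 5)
Your proposal is correct and follows essentially the same route as the paper's proof: factor out the common left multiplier $\|x_0\| V(x_0)(\,\cdot\,) e_1$, insert the finite Taylor expansion \eqref{eq:taylorSeries}, cancel the constant and linear terms, compute the exact range $[(i^{-i/(i-1)}-i^{-1/(i-1)})\delta^i,\,0]$ of $t^i - t\,\delta^{i-1}$ via one-variable calculus, and absorb $E(t,x_0)-\frac{t}{\delta}E(\delta,x_0)$ into $\mathcal{E}(\delta,x_0)$ using monotonicity of $\Phi$. The only cosmetic difference is that you carry the $e_1$ through the subspace-level inclusion while the paper compares the inner matrix part $e^{H t}-I-\frac{t}{\delta}(e^{H\delta}-I)\in\mathcal{F}(x_0)$ directly; your justification of the remainder step by symmetry is at the same level of detail as the paper's own $[0,1]\Phi(\delta)-[0,1]\Phi(\delta)=[-1,1]\Phi(\delta)$ argument.
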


\begin{proof}
Let us start by rearranging \eqref{eq:timeIntervalEnlargement}: 
\begin{equation*}
 \hat{x}_h (t) - x_0 - \frac{t}{\delta}(\hat{x}_h(\delta)-x_0) \\
  \in \|x_0\| V(x_0) \mathcal{F}(x_0) e_1.
\end{equation*}
After replacing $\hat{x}_h(t)$ by $\|x_0\| V(x_0) e^{H(x_0)\, t} e_1$ from \eqref{eq:approxSolutionAndError} and using $\|x_0\| V(x_0) e_1 = x_0$ (this follows from the fact that the first column of $V(x_0)$ is $x_0/\|x_0\|$ according to Alg.~\ref{alg:Arnoldi}), the above inclusion becomes 
\begin{equation*}
 \begin{split}
  & \|x_0\| V(x_0) e^{H(x_0)\, t} e_1 - \|x_0\| V(x_0) e_1 \\
  & -\frac{t}{\delta}\Big(\|x_0\| V(x_0) e^{H(x_0)\, \delta} e_1 - \|x_0\| V(x_0) e_1 \Big) \\
  \in & \|x_0\| V(x_0) \mathcal{F}(x_0) e_1.
   \end{split}
\end{equation*}
Next, we use the result $A(B+C)D = ABD + ACD$ from linear algebra to simplify the above equation to
\begin{equation*}
 \begin{split}
  & \|x_0\| V(x_0) \bigg( e^{H(x_0)\, t} - I - \frac{t}{\delta}\Big(e^{H(x_0)\, \delta} - I \Big)\bigg)e_1 \\
  \in & \|x_0\| V(x_0) \mathcal{F}(x_0) e_1. \\
 \end{split}
\end{equation*}
By comparing the inner part $\square$ of $\|x_0\| V(x_0) \square e_1$, one obtains
\begin{equation*}
	\forall t\in[0,\delta]: \, e^{H(x_0) \, t} -I - \frac{t}{\delta}(e^{H(x_0)\, \delta}-I) \in \mathcal{F}(x_0).
\end{equation*}
Substituting $e^{H(x_0) \, t}$ by \eqref{eq:taylorSeries} and canceling linear terms yields
\begin{equation*}
	\sum_{i=2}^{\eta} (t^i-t\, \delta^{i-1})\frac{1}{i!} H^i(x_0) + E(t,x_0) - \frac{t}{\delta} E(\delta,x_0) \in \mathcal{F}(x_0).
\end{equation*}
The interval of $t^i-t\, \delta^{i-1}$ for $t\in[0,\delta]$ is obtained exactly by computing the minimum and maximum of $t^i-t\,\delta^{i-1}$ for which only one extreme value exists since the second derivative is nonnegative: $\frac{d}{dt}(t^i-t\,\delta^{i-1})=0 \Rightarrow t_{min}=i^{-\frac{1}{i-1}}\delta$. This means that the maximum values are to be found at the borders of $t\in[0,\delta]$, which are both $0$ for $t=0$ and $t=\delta$. Thus, 
\begin{equation*}
\begin{split}
	\{t^i-t\,\delta^{i-1} | t \in [0,\delta] \} &= [t_{min}^i-t_{min}\delta^{i-1}, 0] \\
	&= [(i^{\frac{-i}{i-1}}-i^{\frac{-1}{i-1}})\delta^i,0].
\end{split}
\end{equation*}
It remains to bound $E(t,x_0) - \frac{t}{\delta} E(\delta,x_0)$ for $t\in[0,\delta]$. Using \eqref{eq:matrixExponentialRemainder} we have that
\begin{equation*}
 \begin{split}
  & E(t,x_0) - \frac{t}{\delta} E(\delta,x_0) \\
  \leq & \bigg|E(t,x_0) - \frac{t}{\delta} E(\delta,x_0)\bigg| \\
  \overset{\eqref{eq:matrixExponentialRemainder}}{=} & \bigg|\sum_{i=\eta+1}^{\infty} \frac{t^i}{i!}|H(x_0)|^i - \frac{t}{\delta}\sum_{i=\eta+1}^{\infty} \frac{\delta^i}{i!}|H(x_0)|^i\bigg| \\
  = & \bigg|\sum_{i=\eta+1}^{\infty} (t^i - t\delta^{i-1})\frac{|H(x_0)|^i}{i!} \bigg| \\
  = & \sum_{i=\eta+1}^{\infty} \Big|t^i - t\delta^{i-1}\Big|\frac{|H(x_0)|^i}{i!}  \\
  \leq & \sum_{i=\eta+1}^{\infty} \delta^i \frac{|H(x_0)|^i}{i!} = E(\delta,x_0).
 \end{split}
\end{equation*}
Thus, $\forall t\in[0,\delta]: \, E(t,x_0) - \frac{t}{\delta} E(\delta,x_0) \in \mathcal{E}(\delta,x_0)$, which completes the proof.
\end{proof}

When the initial state $x_0$ is substituted by the set of initial states $\mathcal{X}_0$, Lemma~\ref{thm:correctionMatrix} can be generalized as described in the subsequent theorem.
\begin{theorem}[Solution for time intervals] \label{thm:timeInterval}
 The reachable set for $t\in[0,\delta]$ is over-approximated by
 \begin{equation*}
  \begin{split}
    \mathcal{R}_h([0,\delta]):= \bigcup_{t\in[0,\delta]}\mathcal{R}_h(t) =& \mathtt{conv}\big(\mathcal{X}_0,\mathcal{R}_h(\delta)\big) \oplus \mathcal{N},
  \end{split}
 \end{equation*}
 where 
 \begin{equation*}
  \mathcal{N} = \|c\| V(c) \mathcal{F}(c) e_1 \oplus \bigoplus_{i=1}^p \|g\gen{i}\| V(g\gen{i}) \mathcal{F}(g\gen{i}) e_1.
 \end{equation*}
\end{theorem}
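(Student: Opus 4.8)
The plan is to mirror the structure of the proof of Theorem~\ref{thm:homSolution}: insert the zonotope definition of $\mathcal{X}_0$ into the homogeneous solution, split the resulting expression into a center contribution and generator contributions, and then apply Lemma~\ref{thm:correctionMatrix} termwise. First I would write $\mathcal{R}_h([0,\delta]) = \bigcup_{t\in[0,\delta]} e^{At}\mathcal{X}_0$ and, using linearity, observe that every point in $e^{At}\mathcal{X}_0$ has the form $e^{At}c + \sum_{i=1}^p \beta_i e^{At} g\^{i}$ with $\beta_i\in[-1,1]$. The goal is to enclose the union over $t$ of these points.

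Next I would invoke Lemma~\ref{thm:correctionMatrix} separately for $x_0 = c$ and for each $x_0 = g\^{i}$: combined with Lemma~\ref{thm:singleStateHomSol}, this says that for all $t\in[0,\delta]$ the homogeneous trajectory starting from $c$ lies in the line segment from $c$ to $\hat{x}_h(\delta;c)$ (parametrized by $t/\delta$), enlarged by the reduction error $\mathcal{E}_\mathtt{red}(\delta,c)$ and by $\|c\|V(c)\mathcal{F}(c)e_1$; and analogously for each $g\^{i}$. Summing the center term and the $\beta_i$-weighted generator terms, the "straight-line" parts assemble (with the common parameter $t/\delta$) into the segment from $\mathcal{X}_0$ to $\mathcal{R}_h(\delta)$ — which, as in step~(2) of the standard linear reachability scheme (Fig.~\ref{fig:linReachOverview}) and via \eqref{eq:convexHull}, is over-approximated by $\mathtt{conv}(\mathcal{X}_0,\mathcal{R}_h(\delta))$ — while the correction parts sum to exactly $\mathcal{N} = \|c\|V(c)\mathcal{F}(c)e_1 \oplus \bigoplus_{i=1}^p \|g\^{i}\|V(g\^{i})\mathcal{F}(g\^{i})e_1$ (the $\beta_i\in[-1,1]$ weights are absorbed since each $\|g\^{i}\|V(g\^{i})\mathcal{F}(g\^{i})e_1$ is a symmetric interval set). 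The reduction errors $\mathcal{E}_\mathtt{red}(\delta,\cdot)$ accumulate into the $\mathcal{R}_{h,err}$ term that is already contained in the zonotope representation of $\mathcal{R}_h(\delta)$, hence already accounted for inside $\mathtt{conv}(\mathcal{X}_0,\mathcal{R}_h(\delta))$; I would state this carefully to avoid double counting.

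The main obstacle is the interplay between the union over $t$ and the Minkowski sum over generators: it is not true in general that $\bigcup_t \big(S_0(t)\oplus\cdots\oplus S_p(t)\big) \subseteq \big(\bigcup_t S_0(t)\big)\oplus\cdots\oplus\big(\bigcup_t S_p(t)\big)$ unless one is careful — here it works because the only $t$-dependence that is \emph{not} enclosed by a $t$-independent superset is the common affine factor $t/\delta$ multiplying the segment direction, which is shared across all terms, so the union genuinely reduces to a single convex-hull-type bound plus $t$-independent enlargements. I would make this precise by first fixing $\beta_1,\dots,\beta_p$, bounding the trajectory for that fixed point over all $t\in[0,\delta]$ using Lemma~\ref{thm:correctionMatrix} (which already delivers a $t$-independent enclosure for the nonlinear remainder), then taking the union over $\beta_i\in[-1,1]$, and finally recognizing the resulting set as $\mathtt{conv}(\mathcal{X}_0,\mathcal{R}_h(\delta))\oplus\mathcal{N}$ via \eqref{eq:zonoAdditionMultiplication} and \eqref{eq:convexHull}. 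A secondary point worth a line is that $\mathcal{F}(g\^{i})$ is an interval matrix and $\|g\^{i}\|V(g\^{i})\mathcal{F}(g\^{i})e_1$ is therefore a zonotope (indeed a centrally symmetric set), so multiplying by $\beta_i\in[-1,1]$ only shrinks it, justifying dropping the $\beta_i$ factors when forming $\mathcal{N}$.
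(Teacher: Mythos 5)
Your proposal is correct and follows essentially the same route as the paper's proof: both rest on Lemma~\ref{thm:correctionMatrix} applied to the center and each generator, absorb the $t$-proportional reduction error $\mathcal{E}_\mathtt{red}$ into $\mathtt{conv}(\mathcal{X}_0,\mathcal{R}_h(\delta))$, and drop the $\beta_i\in[-1,1]$ factors on the correction terms using the symmetry of the entries of $\mathcal{F}(g\^{i})$. The only difference is bookkeeping order --- the paper first bounds the union over all $x_0\in\mathcal{X}_0$ and then specializes to the zonotope structure, while you decompose into center and generators from the outset --- and your explicit treatment of the union-versus-Minkowski-sum subtlety is, if anything, slightly more careful than the paper's.
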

\begin{proof}
 Let us start by formulating the reachable set for $t\in[0,\delta]$ using $x_h(t) \in \hat{x}_h (t) \oplus \mathcal{E}_\mathtt{red}(t, x_0)$ from \eqref{eq:initialStateSolution} and $\hat{x}_h (t)$ from \eqref{eq:timeIntervalEnlargement}; we also use $\forall t \in [0,\delta]: \mathcal{E}_\mathtt{red}(t, x_0) \subseteq \mathcal{E}_\mathtt{red}(\delta, x_0)$:
 \begin{equation} \label{eq:trivialSolutionTimeInterval}
  \begin{split}
   \mathcal{R}_h(t) \subseteq & \Big\{x_0+\frac{t}{\delta}(\hat{x}_h(\delta)-x_0)\Big| x_0 \in \mathcal{X}_0 \Big\} \\
   & \oplus \bigcup_{x_0 \in \mathcal{X}_0} \|x_0\| V(x_0) \mathcal{F}(x_0) e_1 \\
   & \oplus \frac{t}{\delta} \bigcup_{x_0 \in \mathcal{X}_0} \mathcal{E}_\mathtt{red}(\delta, x_0).
  \end{split}
 \end{equation}
 Since 
 \begin{equation*}
 \begin{split}
  \forall t\in[0,\delta]: & \Big\{x_0+\frac{t}{\delta}(\hat{x}_h(\delta)-x_0) \Big| x_0 \in \mathcal{X}_0 \Big\} \\
  & \oplus \frac{t}{\delta} \bigcup_{x_0 \in \mathcal{X}_0} \mathcal{E}_\mathtt{red}(\delta, x_0) \\
  & \subseteq \mathtt{conv}(\mathcal{X}_0,\mathcal{R}_h(\delta))
 \end{split}
 \end{equation*}
 we can simplify \eqref{eq:trivialSolutionTimeInterval} to
 \begin{equation*} 
  \begin{split}
   \mathcal{R}_h([0,\delta]) \subseteq & \mathtt{conv}(\mathcal{X}_0,\mathcal{R}_h(\delta))  \\
   & \oplus \bigcup_{x_0 \in \mathcal{X}_0} \|x_0\| V(x_0) \mathcal{F}(x_0) e_1 \\
   \subseteq & \mathtt{conv}(\mathcal{X}_0,\mathcal{R}_h(\delta)) \oplus \hat{\mathcal{N}},
  \end{split}
 \end{equation*}
 where $\hat{\mathcal{N}} := \bigcup_{x_0 \in \mathcal{X}_0} \|x_0\| V(x_0) \mathcal{F}(x_0) e_1$. It remains to over-approximate $\hat{\mathcal{N}}$ when the initial set is a zonotope:
 \begin{equation*}
 \begin{split}
  & \bigcup_{x_0 \in \mathcal{X}_0} \|x_0\| V(x_0) \mathcal{F}(x_0) e_1 \\
  \subseteq & \underbrace{\|c\| V(c) \mathcal{F}(c) e_1}_{=: \mathcal{N}\gen{0}} \oplus \bigoplus_{i=1}^p \underbrace{[-1,1] \|g\gen{i}\| V(g\gen{i}) \mathcal{F}(g\gen{i}) e_1}_{= \|g\gen{i}\| V(g\gen{i}) \mathcal{F}(g\gen{i}) e_1 =: \mathcal{N}\gen{i}} =: \mathcal{N},
 \end{split} 
 \end{equation*}
 and the interval vectors $\mathcal{N}\gen{i}$ are added using standard interval arithmetic \cite{Jaulin2006}. Thus, $\mathcal{N}$ is an interval vector, which is converted to a zonotope and added to the convex hull. The scalar interval $[-1,1]$ can be moved right in front of $\mathcal{F}(g\gen{i})$ and since all entries of $\mathcal{F}(g\gen{i})$ have symmetric bounds, we have that $[-1,1]\mathcal{F}(g\gen{i}) = \mathcal{F}(g\gen{i})$ so that $[-1,1]$ can be removed.
\end{proof}

Next, we derive the set of solutions due to uncertain inputs.

\section{Input Solution in the Krylov Subspace} \label{sec:KrylovReachParticular}

In this section, we obtain for the first time the set of input solutions for uncertain, time-varying inputs in the Krylov subspace. We first consider over-approximations for solutions of a single constant input. Next, we generalize this result to uncertain but constant inputs. Finally, we derive an over-approximation for arbitrarily-varying and uncertain inputs. The first lemma provides the over-approximation for a constant, known input. 
\begin{lemma}[Krylov error of constant input solution] \label{thm:KrylovErrorInput}
After obtaining $\tilde{V}(u)$, $\tilde{H}(u)$ from Alg.~\ref{alg:Arnoldi} and $\tilde{\epsilon}_\textrm{norm} \, \delta$ from Appendix~\ref{app:errorBound} with inputs 
\begin{equation*}
 C = \begin{bmatrix} A & u \\ \mathbf{0}^{1 \times n} & 0 \end{bmatrix}, \quad v = \begin{bmatrix} \mathbf{0}^n \\ 1 \end{bmatrix},
\end{equation*}
we can bound the particular solution (aka input solution) for constant inputs 
 \begin{equation} \label{eq:constInputSolution}
  x_{p,\mathtt{const}}(\delta) = \int_0^\delta e^{A (\delta - t)} \mathtt{d}t \, u = \int_0^\delta e^{A t} \mathtt{d}t \, u
 \end{equation}
by
 \begin{equation*} 
\begin{split}
 x_{p,\mathtt{const}}(\delta) & \in P \, \tilde{V}(u) e^{\tilde{H}(u) \, \delta} e_1 \oplus \mathbf{[-1,1]}^n \tilde{\epsilon}_\textrm{norm} \, \delta \\
 & = \tilde{x}_p (\delta) \oplus \tilde{\mathcal{E}}_\mathtt{red} (\delta),
\end{split}
\end{equation*}
where
\begin{equation} \label{eq:approxInputSolutionAndError}
\begin{split}
 P &= [I, \mathbf{0}^n], \qquad \text{($I$ is the $n\times n$ identity matrix)} \\
 \tilde{x}_p (\delta) &= P\, \tilde{V}(u) e^{\tilde{H}(u) \, \delta} e_1  \\
 \tilde{\mathcal{E}}_\mathtt{red} (\delta) &= \mathbf{[-1,1]}^n \tilde{\epsilon}_\textrm{norm} \, \delta.
\end{split}
\end{equation}
\end{lemma}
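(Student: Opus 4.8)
The plan is to reduce this lemma to the already-proven Lemma~\ref{thm:singleStateHomSol} via the standard \emph{augmented-matrix trick}, which expresses the integral $\int_0^\delta e^{At}\,\mathtt{d}t\,u$ as a sub-block of an ordinary matrix exponential. Concretely, with $C$ and $v$ as defined in the statement, I would first show that $P\,e^{C\,\delta}v = x_{p,\mathtt{const}}(\delta)$ exactly, and then apply Lemma~\ref{thm:singleStateHomSol} to the $(n{+}1)$-dimensional system $(C,v)$ and project the resulting enclosure back through $P$.

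First I would dispose of the two equalities in \eqref{eq:constInputSolution}: the substitution $s=\delta-t$ gives $\int_0^\delta e^{A(\delta-t)}\,\mathtt{d}t = \int_0^\delta e^{As}\,\mathtt{d}s$. Next, a one-line induction on the block-triangular structure yields $C^k = \begin{bmatrix} A^k & A^{k-1}u \\ \mathbf{0}^{1\times n} & 0\end{bmatrix}$ for every $k\geq 1$, so that summing the Taylor series of $e^{C\,\delta}$ gives
\begin{equation*}
 e^{C\,\delta} = \begin{bmatrix} e^{A\,\delta} & \big(\sum_{k\geq 1}\frac{\delta^k}{k!}A^{k-1}\big)u \\ \mathbf{0}^{1\times n} & 1 \end{bmatrix} = \begin{bmatrix} e^{A\,\delta} & \big(\int_0^\delta e^{At}\,\mathtt{d}t\big)u \\ \mathbf{0}^{1\times n} & 1 \end{bmatrix},
\end{equation*}
where the reindexed power series $\sum_{j\geq 0}\frac{\delta^{j+1}}{(j+1)!}A^j$ is recognized as $\int_0^\delta e^{At}\,\mathtt{d}t$ (valid whether or not $A$ is invertible). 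Applying this to $v$ picks out the last column, $e^{C\,\delta}v = [\,(\int_0^\delta e^{At}\,\mathtt{d}t\,u)^T,\ 1\,]^T$, and left-multiplication by $P=[I,\mathbf{0}^n]$ returns exactly $x_{p,\mathtt{const}}(\delta)$.

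It then remains to carry the Krylov error through this identity. Since $\|v\|=1$, Lemma~\ref{thm:singleStateHomSol} applied to $(C,v)$ with horizon $\delta$ gives $e^{C\,\delta}v \in \tilde{V}(u)\,e^{\tilde{H}(u)\,\delta}e_1 \oplus \mathbf{[-1,1]}^{n+1}\,\tilde{\epsilon}_\textrm{norm}\,\delta$, where the linear $\delta$-scaling of the error is legitimate because $\tilde{\epsilon}_\textrm{norm}$ stems from the $e^{C\,t}$-type bound of \cite[Corollary~5.3]{Wang2016}, exactly as in Lemma~\ref{thm:singleStateHomSol}. Left-multiplying by $P$ and using that projecting an interval box onto its first $n$ coordinates yields $P\,\mathbf{[-1,1]}^{n+1} = \mathbf{[-1,1]}^n$, one obtains $x_{p,\mathtt{const}}(\delta) \in P\,\tilde{V}(u)\,e^{\tilde{H}(u)\,\delta}e_1 \oplus \mathbf{[-1,1]}^n\,\tilde{\epsilon}_\textrm{norm}\,\delta$, which is the claimed bound with $\tilde{x}_p(\delta)$ and $\tilde{\mathcal{E}}_\mathtt{red}(\delta)$ as in \eqref{eq:approxInputSolutionAndError}. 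I do not expect a genuine obstacle; the only points requiring care are the reindexing of the power series (so the argument does not covertly assume $A$ invertible) and confirming that the error machinery of Appendix~\ref{app:errorBound} applies verbatim to the augmented matrix $C$ and vector $v$ and scales linearly in $\delta$.
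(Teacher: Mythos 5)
Your proposal is correct and follows essentially the same route as the paper: the paper likewise uses the augmented-matrix trick (citing Sidje) to write $x_{p,\mathtt{const}}(t) = P\,e^{\tilde{A}(u)t}\tilde{x}_0$ and then transfers the Krylov error bound through $P$, using $\|P\| = \|\tilde{x}_0\| = 1$ where you instead project the interval box coordinatewise. Your explicit Taylor-series derivation of the block structure of $e^{C\delta}$ is a sound (and slightly more self-contained) way to justify the identity the paper obtains from the augmented ODE.
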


\begin{proof}
 We first rewrite the solution of \eqref{eq:constInputSolution} to the differential equation below as also performed in \cite[Sec.~5]{Sidje1998}:
 \begin{equation*} 
  \underbrace{\begin{bmatrix} \dot{x}_{p,\mathtt{const}} \\ \dot{\check{x}} \end{bmatrix}}_{\dot{\tilde{x}}} = \underbrace{\begin{bmatrix} A & u \\ \mathbf{0}^{1 \times n} & 0 \end{bmatrix}}_{=:\tilde{A}(u)} \underbrace{\begin{bmatrix} x_{p,\mathtt{const}} \\ \check{x} \end{bmatrix}}_{\tilde{x}}, \quad \tilde{x}(0) = \tilde{x}_0 = \begin{bmatrix} \mathbf{0}^n \\ 1 \end{bmatrix}.
 \end{equation*}
 After inserting the projection matrix $P=[I, \mathbf{0}^n]$ (see \eqref{eq:approxInputSolutionAndError}), we can write
 \begin{equation*} 
  x_{p,\mathtt{const}}(t) = P \, e^{\tilde{A}(u)t} \tilde{x}_0.
 \end{equation*}
 This makes it possible to reformulate the Krylov error of the input solution:
 \begin{equation*}
  \begin{split}
  & \|\underbrace{P \, e^{\tilde{A}(u)\, t} \tilde{x}_0}_{x_{p,\mathtt{const}}(t)} - \underbrace{P\, \|\tilde{x}_0\| \tilde{V}(u) e^{\tilde{H}(u)\, t} e_1}_{\text{Krylov approximation of }x_{p,\mathtt{const}}(t)} \| \\
  \leq & \|P\| \, \|e^{\tilde{A}(u)\, t} \tilde{x}_0 - \|\tilde{x}_0\|\tilde{V}(u) e^{\tilde{H}(u)\, t} e_1 \|, \, \, \, (\|P\|=\| \tilde{x}_0\| =1)\\
  = & \|e^{\tilde{A}(u)\, t} \tilde{x}_0 - \tilde{V}(u) e^{\tilde{H}(u)\, t} e_1 \| \leq \tilde{\epsilon}_\textrm{norm} \, t
  \end{split}
  \end{equation*}
  and $\tilde{\epsilon}_\textrm{norm} \, t$ is obtained as in Appendix~\ref{app:errorBound}.
\end{proof}

In order to generalize the previous results to arbitrarily-varying inputs, we require the following corollary on the input solution for general time bounds:
\begin{corollary}[Input solution for general time bounds] \label{thm:KrylovErrorInput_generalTimeBounds}
 The partial input solution for 
 \begin{equation*}
  x_{p,\mathtt{const}}([t_0,t_e]) :=  \int_{t_0}^{t_e} e^{A (\delta - t)} \mathtt{d}t \, u,
 \end{equation*}
 where $0\leq t_0 \leq t_e \leq \delta$, can be over-approximated as
\begin{equation*}
\begin{split}
x_{p,\mathtt{const}}([t_0,t_e]) \in & P\, \tilde{V}(u) (e^{\tilde{H}(u) \, (\delta - t_0)} - e^{\tilde{H}(u) \, (\delta - t_e)}) e_1 \\
  & \oplus \mathbf{[-1,1]}^n \tilde{\epsilon}_\textrm{norm} \, (t_e - t_0).
\end{split}
\end{equation*}
\end{corollary}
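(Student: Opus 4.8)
The plan is to reduce Corollary~\ref{thm:KrylovErrorInput_generalTimeBounds} to Lemma~\ref{thm:KrylovErrorInput} by expressing the partial input solution over $[t_0,t_e]$ as a difference of two ``full'' input-type integrals, each of which is handled by the same augmented-matrix trick used in the lemma. Concretely, I would first observe that, by additivity of the integral,
\begin{equation*}
 x_{p,\mathtt{const}}([t_0,t_e]) = \int_{t_0}^{t_e} e^{A(\delta-t)}\mathtt{d}t\, u = \int_{0}^{\delta - t_0} e^{A s}\mathtt{d}s\, u - \int_{0}^{\delta - t_e} e^{A s}\mathtt{d}s\, u,
\end{equation*}
where the substitution $s = \delta - t$ has been applied to each piece. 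Each of the two integrals on the right is exactly of the form $\int_0^{T} e^{As}\mathtt{d}s\, u$ appearing in \eqref{eq:constInputSolution}, with $T = \delta - t_0$ and $T = \delta - t_e$ respectively, and $0 \le \delta - t_e \le \delta - t_0 \le \delta$.

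Next I would invoke the identity $P\, e^{\tilde{A}(u) s}\tilde{x}_0 = \int_0^{s} e^{A\sigma}\mathtt{d}\sigma\, u$ established in the proof of Lemma~\ref{thm:KrylovErrorInput} (this is the content of the augmented-system reformulation, with $\tilde{x}_0 = [\mathbf{0}^n;\,1]^T$ and $P = [I,\mathbf{0}^n]$), so that
\begin{equation*}
 x_{p,\mathtt{const}}([t_0,t_e]) = P\, e^{\tilde{A}(u)(\delta - t_0)}\tilde{x}_0 - P\, e^{\tilde{A}(u)(\delta - t_e)}\tilde{x}_0 = P\big(e^{\tilde{A}(u)(\delta-t_0)} - e^{\tilde{A}(u)(\delta-t_e)}\big)\tilde{x}_0.
\end{equation*}
Now I would apply the Krylov approximation $e^{\tilde{A}(u)s}\tilde{x}_0 \approx \|\tilde{x}_0\|\tilde{V}(u)e^{\tilde{H}(u)s}e_1 = \tilde{V}(u)e^{\tilde{H}(u)s}e_1$ (using $\|\tilde{x}_0\| = 1$) to both terms, obtaining the claimed nominal part $P\,\tilde{V}(u)\big(e^{\tilde{H}(u)(\delta-t_0)} - e^{\tilde{H}(u)(\delta-t_e)}\big)e_1$, and then bound the total error by estimating $\|P(e^{\tilde{A}(u)s} - \tilde{V}(u)e^{\tilde{H}(u)s})\tilde{x}_0\| \le \tilde\epsilon_\textrm{norm}\, s$ for each $s$ as in the lemma, so that the two errors combine into $\mathbf{[-1,1]}^n\,\tilde\epsilon_\textrm{norm}\big((\delta-t_0) + (\delta-t_e)\big)$ in the worst case.

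The main obstacle is that the crude bound just described gives $\tilde\epsilon_\textrm{norm}\big((\delta-t_0)+(\delta-t_e)\big)$, whereas the corollary claims the tighter $\tilde\epsilon_\textrm{norm}\,(t_e - t_0)$; the two coincide only if one does not bound the two pieces separately. To get the sharper constant I would instead keep the difference together and use that the error of the Krylov approximation of $\int_0^{T_1} - \int_0^{T_2} = \int_{T_2}^{T_1}$ scales with the \emph{length} $T_1 - T_2 = t_e - t_0$ of the integration interval — this is precisely the ``the error can simply be multiplied by $t$'' property noted after Lemma~\ref{thm:singleStateHomSol} and coming from \cite[Corollary~5.3]{Wang2016}, applied here to the time-integrated exponential rather than the exponential itself. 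So the cleanest route is: write the exact solution as $P e^{\tilde{A}(u)(\delta - t_e)}\big(e^{\tilde{A}(u)(t_e-t_0)} - I\big)\tilde{x}_0$ or, more directly, note that $\int_{t_0}^{t_e} e^{A(\delta-t)}\mathtt{d}t\,u$ is itself an input-type integral of length $t_e - t_0$ (with the ``start'' of the window shifted), apply Lemma~\ref{thm:KrylovErrorInput} / \cite[Corollary~5.3]{Wang2016} with time horizon $t_e - t_0$ to get the error $\mathbf{[-1,1]}^n\,\tilde\epsilon_\textrm{norm}\,(t_e-t_0)$, and only then re-expand the nominal term as the difference $e^{\tilde{H}(u)(\delta-t_0)} - e^{\tilde{H}(u)(\delta-t_e)}$ using $e^{\tilde{H}(u)(\delta-t_e)}(e^{\tilde{H}(u)(t_e-t_0)} - I) = e^{\tilde{H}(u)(\delta-t_0)} - e^{\tilde{H}(u)(\delta-t_e)}$, which holds because powers of the same matrix $\tilde{H}(u)$ commute. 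Verifying that the error constant $\tilde\epsilon_\textrm{norm}$ obtained from Appendix~\ref{app:errorBound} indeed depends only on the window length and not on its position is the one point that needs care.
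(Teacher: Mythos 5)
Your decomposition of $x_{p,\mathtt{const}}([t_0,t_e])$ as $\bigl(\int_0^{\delta-t_0} e^{As}\mathtt{d}s - \int_0^{\delta-t_e} e^{As}\mathtt{d}s\bigr)u$ followed by two applications of Lemma~\ref{thm:KrylovErrorInput} is exactly the paper's route; the paper then simply writes the combined error as $\mathbf{[-1,1]}^n\,\tilde{\epsilon}_\textrm{norm}\,(\delta-t_0-(\delta-t_e)) = \mathbf{[-1,1]}^n\,\tilde{\epsilon}_\textrm{norm}\,(t_e-t_0)$, i.e., it subtracts the two error radii. The obstacle you flag is therefore real and is precisely the point the paper's proof leaves implicit: Minkowski addition of the two interval error sets would give radius $\tilde{\epsilon}_\textrm{norm}\bigl((\delta-t_0)+(\delta-t_e)\bigr)$, and interval uncertainties do not cancel under subtraction. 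The justification for the tighter constant is the one you identify informally: the a posteriori bound of \cite[Corollary~5.3]{Wang2016} comes from an integral representation of the Krylov defect, so that with $f(s) := e^{\tilde{A}(u)s}\tilde{x}_0 - \tilde{V}(u)e^{\tilde{H}(u)s}e_1$ one has $f(s)=\int_0^s(\cdot)\,\mathtt{d}\sigma$ with integrand bounded in norm by $\tilde{\epsilon}_\textrm{norm}$, hence $\|f(\delta-t_0)-f(\delta-t_e)\|\le\tilde{\epsilon}_\textrm{norm}\,(t_e-t_0)$. Stated that way, your argument closes the gap and yields the corollary with the claimed nominal term unchanged.

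Be careful, though, with your proposed ``cleanest route.'' Applying Lemma~\ref{thm:KrylovErrorInput} with horizon $t_e-t_0$ approximates $\int_0^{t_e-t_0}e^{As}\mathtt{d}s\,u$, whose Krylov nominal term is $P\,\tilde{V}(u)e^{\tilde{H}(u)(t_e-t_0)}e_1$; to convert this into the claimed $P\,\tilde{V}(u)\bigl(e^{\tilde{H}(u)(\delta-t_0)}-e^{\tilde{H}(u)(\delta-t_e)}\bigr)e_1$ you would need to push the full propagator $e^{\tilde{A}(u)(\delta-t_e)}$ through the Krylov projection, i.e., justify replacing $e^{\tilde{A}(u)(\delta-t_e)}\tilde{V}(u)(\cdots)$ by $\tilde{V}(u)e^{\tilde{H}(u)(\delta-t_e)}(\cdots)$, which is an additional approximation with its own error not covered by the lemma. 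So the difference-of-defects argument above is the one to keep; the shifted-window reformulation does not come for free.
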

\begin{proof}
 Let us rewrite $x_{p,\mathtt{const}}([t_0,t_e])$ as 
 \begin{equation*} 
 \begin{split}
  x_{p,\mathtt{const}}([t_0,t_e]) :=& \int_{t_0}^{t_e} e^{A (\delta - t)} \mathtt{d}t \, u = \int_{\delta - t_e}^{\delta - t_0} e^{A t} \mathtt{d}t \, u \\
  =& \Big(\int_{0}^{\delta - t_0} e^{A t} \mathtt{d}t - \int_{0}^{\delta - t_e} e^{A t} \mathtt{d}t \Big) u.
 \end{split}
 \end{equation*}
 After applying Lemma~\ref{thm:KrylovErrorInput}, we obtain
 \begin{equation*}
 \begin{split}
  & x_{p,\mathtt{const}}([t_0,t_e]) \\
  = & P\, \tilde{V}(u) e^{\tilde{H}(u) \, (\delta - t_0)} e_1 - P\, \tilde{V}(u) e^{\tilde{H}(u) \, (\delta - t_e)}e_1 \\
  & \oplus \mathbf{[-1,1]}^n \tilde{\epsilon}_\textrm{norm} \, (\delta - t_0 - (\delta - t_e)) \\
  =& P\, \tilde{V}(u) (e^{\tilde{H}(u) \, (\delta - t_0)} - e^{\tilde{H}(u) \, (\delta - t_e)}) e_1 \\
  & \oplus \mathbf{[-1,1]}^n \tilde{\epsilon}_\textrm{norm} \, (t_e - t_0).
 \end{split} 
 \end{equation*}
\end{proof}

Next, we over-approximate the solution for constant inputs which are uncertain within the set $\mathcal{U}$. In many cases, it is desired to have the solution for constant inputs, e.g., if a control system with zero-order hold is considered.
\begin{theorem}[Reachable set for constant inputs]\label{thm:constUncertainInputs}
 The reachable set of the input solution 
 \begin{equation*}
  \hat{\mathcal{R}}_p(\delta) := \int_0^\delta e^{A (\delta - t)} \mathtt{d}t \, \mathcal{U} = \int_0^\delta e^{A t} \mathtt{d}t \, \mathcal{U},
 \end{equation*}
 where $\mathcal{U} = (c_u,g_u\gen{1},\ldots,g_u\gen{q})$ is a zonotope, can be over-approximated by the zonotope
\begin{equation*}
\begin{split}
  \hat{\mathcal{R}}_p(\delta) & \subseteq (\tilde{c}_u, \tilde{g}_u\gen{1},\ldots,\tilde{g}_u\gen{q}) \oplus \hat{\mathcal{R}}_{p,err}, \\
  \tilde{c}_u & = P \, \tilde{V}(c_u) e^{\tilde{H}(c_u) \, \delta} e_1, \\
  \tilde{g}_u\gen{i} & = P \, \tilde{V}(g_u\gen{i}) e^{\tilde{H}(g_u\gen{i}) \, \delta} e_1, \\
  \hat{\mathcal{R}}_{p,err} & = \mathbf{[-1,1]}^n \Big(\tilde{\epsilon}_\textrm{norm}(c_u) + \sum_{i=1}^q \tilde{\epsilon}_\textrm{norm}(g_u\gen{i}) \Big)\delta.
\end{split}
\end{equation*}
\end{theorem}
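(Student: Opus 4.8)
The plan is to mimic exactly the structure of the proof of Theorem~\ref{thm:homSolution}, since Theorem~\ref{thm:constUncertainInputs} is the ``input-solution'' analogue of the homogeneous-solution result: both substitute a zonotope for a single seed vector and then split the linearity of the exponential-times-vector map over the center and the generators. First I would insert the zonotope definition (Def.~\ref{def:zonotope}) for $\mathcal{U} = (c_u, g_u\^{1}, \ldots, g_u\^{q})$ into $\hat{\mathcal{R}}_p(\delta) = \int_0^\delta e^{A t}\,\mathtt{d}t\,\mathcal{U}$. Because the integral operator $\int_0^\delta e^{At}\,\mathtt{d}t$ is a fixed linear map on $\mathbb{R}^n$, applying it to a zonotope distributes over the center and generators exactly as in \eqref{eq:zonoAdditionMultiplication}, giving
\begin{equation*}
 \hat{\mathcal{R}}_p(\delta) = \Big(\int_0^\delta e^{At}\,\mathtt{d}t\Big) c_u \;\oplus\; \bigoplus_{i=1}^q [-1,1]\Big(\int_0^\delta e^{At}\,\mathtt{d}t\Big) g_u\^{i}.
\end{equation*}

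Next I would bound each of these $q+1$ terms using Lemma~\ref{thm:KrylovErrorInput}. That lemma is stated for a fixed known constant input $u$, but here each of $c_u$ and $g_u\^{1},\ldots,g_u\^{q}$ plays the role of ``$u$'': running Alg.~\ref{alg:Arnoldi} with the augmented matrix $C = \begin{bmatrix} A & u \\ \mathbf{0}^{1\times n} & 0\end{bmatrix}$ and $v = \begin{bmatrix}\mathbf{0}^n \\ 1\end{bmatrix}$ for $u = c_u$ yields $\tilde V(c_u), \tilde H(c_u)$, and similarly for each generator. Thus Lemma~\ref{thm:KrylovErrorInput} gives
\begin{equation*}
 \Big(\int_0^\delta e^{At}\,\mathtt{d}t\Big) c_u \in P\,\tilde V(c_u)\, e^{\tilde H(c_u)\,\delta} e_1 \;\oplus\; \mathbf{[-1,1]}^n\, \tilde\epsilon_\textrm{norm}(c_u)\,\delta,
\end{equation*}
and the analogous inclusion for each $g_u\^{i}$. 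I would substitute these into the Minkowski-sum expansion above, then collect the approximation terms into the zonotope $(\tilde c_u, \tilde g_u\^{1},\ldots,\tilde g_u\^{q})$ and the interval error terms. The sum of the intervals $[-1,1] \,\tilde\epsilon_\textrm{norm}(c_u)\,\delta \oplus \bigoplus_{i=1}^q \mathbf{[-1,1]}^n\, \tilde\epsilon_\textrm{norm}(g_u\^{i})\,\delta$ collapses, by the same equivalence used in \eqref{eq:intervalVectorEquivalence}, into $\mathbf{[-1,1]}^n\big(\tilde\epsilon_\textrm{norm}(c_u) + \sum_{i=1}^q \tilde\epsilon_\textrm{norm}(g_u\^{i})\big)\delta = \hat{\mathcal{R}}_{p,err}$ (absorbing the scalar $[-1,1]$ on the center term into the symmetric box, exactly as in the proof of Theorem~\ref{thm:homSolution}). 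Finally, since $\hat{\mathcal{R}}_{p,err}$ is a multidimensional interval, hence a zonotope, its Minkowski sum with $(\tilde c_u,\tilde g_u\^{1},\ldots,\tilde g_u\^{q})$ is again a zonotope (append the box generators), giving the stated over-approximation.

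There is essentially no deep obstacle here — the argument is a near-verbatim transcription of the proof of Theorem~\ref{thm:homSolution} with $e^{A\,t}$ replaced by $\int_0^\delta e^{At}\,\mathtt{d}t$ and Lemma~\ref{thm:singleStateHomSol} replaced by Lemma~\ref{thm:KrylovErrorInput}. The only points requiring mild care are: (i) noting that Lemma~\ref{thm:KrylovErrorInput}'s error bound carries no factor $\|u\|$ in front (unlike the homogeneous case), because the seed vector $\tilde x_0 = \begin{bmatrix}\mathbf{0}^n \\ 1\end{bmatrix}$ always has unit norm regardless of $u$ — so the error term is just $\tilde\epsilon_\textrm{norm}(\,\cdot\,)\,\delta$ with the dependence on the particular input hidden inside $\tilde\epsilon_\textrm{norm}$; and (ii) justifying that the fixed linear map $\int_0^\delta e^{At}\,\mathtt{d}t$ commutes with the Minkowski-sum structure of the zonotope, which is immediate from \eqref{eq:zonoAdditionMultiplication} applied with $M = \int_0^\delta e^{At}\,\mathtt{d}t$. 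I would write the proof in three short displayed steps: zonotope expansion, termwise application of Lemma~\ref{thm:KrylovErrorInput}, and regrouping into center/generators/box, closing with the remark that the box addition keeps the result a zonotope.
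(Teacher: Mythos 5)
Your proposal is correct and follows essentially the same route as the paper's own proof: insert the zonotope definition, distribute the linear map $\int_0^\delta e^{A(\delta-t)}\,\mathtt{d}t$ over center and generators via \eqref{eq:zonoAdditionMultiplication}, bound each of the $q+1$ terms with Lemma~\ref{thm:KrylovErrorInput}, and collect the interval errors into $\hat{\mathcal{R}}_{p,err}$. Your side remarks (the absence of a $\|u\|$ factor due to the unit-norm augmented seed vector, and the closure under Minkowski addition with a box) match the paper's reasoning exactly.
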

\begin{proof}
 After inserting the definition of a zonotope (Def.~\ref{def:zonotope}) into $\hat{\mathcal{R}}_p(\delta) := \int_0^\delta e^{A (\delta - t)} \mathtt{d}t \, \mathcal{U}$ and using \eqref{eq:zonoAdditionMultiplication}, we obtain
 \begin{equation*}
  \hat{\mathcal{R}}_p(\delta) = \int_0^\delta e^{A (\delta - t)} \mathtt{d}t \,c_u \oplus \bigoplus_{i=1}^q [-1,1] \int_0^\delta e^{A (\delta - t)} \mathtt{d}t \, g_u\gen{i}.
 \end{equation*}
 Using Lemma~\ref{thm:KrylovErrorInput}, one receives
  \begin{equation*}
  \begin{split}
   \hat{\mathcal{R}}_p(\delta) \subseteq & P\,\tilde{V}(c_u) e^{\tilde{H}(c_u) \, \delta} e_1 \oplus \mathbf{[-1,1]}^n \tilde{\epsilon}_\textrm{norm}(c_u) \, \delta \\
   & \oplus \bigoplus_{i=1}^q \Big([-1,1] P \, \tilde{V}(g_u\gen{i}) e^{\tilde{H}(g_u\gen{i})\, \delta} \, e_1 \\
   & \oplus \mathbf{[-1,1]}^n \tilde{\epsilon}_\textrm{norm}(g_u\gen{i}) \, \delta \Big) \\
   =&  P\, \tilde{V}(c_u) e^{\tilde{H}(c_u) \, \delta} e_1 \\
   & \oplus \bigoplus_{i=1}^q [-1,1] P\, \tilde{V}(g_u\gen{i}) e^{\tilde{H}(g_u\gen{i}) \, \delta} e_1 \\
   & \oplus \underbrace{\mathbf{[-1,1]}^n \Big(\tilde{\epsilon}_\textrm{norm}(c_u) + \sum_{i=1}^q \tilde{\epsilon}_\textrm{norm}(g_u\gen{i}) \Big)\delta}_{= \hat{\mathcal{R}}_{p,err}}.
  \end{split}
 \end{equation*}
 This results in the zonotope of the theorem. Since $\hat{\mathcal{R}}_{p,err}$ is a multidimensional interval, and thus also a zonotope, the addition of the two zonotopes $\hat{\mathcal{R}}_{p,err}$ and $(\tilde{c}_u, \tilde{g}_u\gen{1},\ldots,\tilde{g}_u\gen{q})$ results in the zonotope $\hat{\mathcal{R}}_p(\delta)$.
\end{proof}

The above derivations only hold for constant inputs. We generalize the previous results to arbitrary input trajectories in the following theorem. 
\begin{theorem}[Reachable set for varying inputs] \label{thm:arbitraryInputSet}
 The reachable set due to uncertain inputs
 \begin{equation*}
 \begin{split}
  \mathcal{R}_p(\delta) = \Big\{x_p(\delta) \Big| & x_p(\delta)=\int_0^\delta e^{A(\delta-t)} u(t) \, dt, \\
  & \forall t\in[0, \delta]: u(t) \in \mathcal{U} \Big\}
  \end{split}
 \end{equation*}
can be over-approximated by
\begin{equation*}
  \begin{split}
    \mathcal{R}_p(\delta) \subseteq & P \Big( \bigoplus_{j=1}^\eta \Big\{ \tilde{V}(u)\frac{\tilde{H}^j(u)}{j!} \Big| u\in\mathcal{U}\Big\}\delta^j \\
  \oplus & \Big\{\tilde{\mathcal{E}}(\delta, u) \Big| u\in\mathcal{U}\Big\} \Big) e_1 \\
  \oplus & \mathbf{[-1,1]}^n \Big\{\tilde{\epsilon}_\textrm{norm}(u) \Big| u\in\mathcal{U}\Big\} \delta,
  \end{split}
\end{equation*} 
where $\tilde{\mathcal{E}}(\delta, u)$ is obtained from \eqref{eq:matrixExponentialRemainder} by replacing $H(x_0)$ with $\tilde{H}(u)$.
\end{theorem}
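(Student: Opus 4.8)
The plan is to reduce a general time-varying input to a limit of piecewise-constant inputs, apply Corollary~\ref{thm:KrylovErrorInput_generalTimeBounds} slice by slice, and then carry out a Taylor expansion in which a telescoping identity does the real work. Fix a partition $0=\tau_0<\tau_1<\dots<\tau_N=\delta$ and let $u\equiv w_k\in\mathcal U$ on $(\tau_{k-1},\tau_k)$; recall $\mathcal U$ is taken to be a zonotope, hence convex and compact. Then $x_p(\delta)=\sum_{k=1}^{N}\int_{\tau_{k-1}}^{\tau_k}e^{A(\delta-\tau)}\mathtt{d}\tau\,w_k$, and Corollary~\ref{thm:KrylovErrorInput_generalTimeBounds} gives
\[
x_p(\delta)\in\bigoplus_{k=1}^{N}\Big(P\tilde V(w_k)\big(e^{\tilde H(w_k)(\delta-\tau_{k-1})}-e^{\tilde H(w_k)(\delta-\tau_k)}\big)e_1\oplus\mathbf{[-1,1]}^n\,\tilde\epsilon_\textrm{norm}(w_k)(\tau_k-\tau_{k-1})\Big).
\]
Since $\mathcal R_p(\delta)$ is the Aumann integral of the convex-valued map $\tau\mapsto e^{A(\delta-\tau)}\mathcal U$, it equals the closure of the union of these sets over all partitions and all choices $w_k\in\mathcal U$; the right-hand side of the theorem is closed (it is in any case over-approximated afterwards by a zonotope), so it suffices to enclose a generic such element.

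For the \textbf{main term}, expand each exponential difference to order $\eta$, writing $\alpha_k:=\delta-\tau_{k-1}$, $\beta_k:=\delta-\tau_k$:
\[
e^{\tilde H(w_k)\alpha_k}-e^{\tilde H(w_k)\beta_k}=\sum_{j=1}^{\eta}\frac{\tilde H(w_k)^{j}}{j!}\big(\alpha_k^{\,j}-\beta_k^{\,j}\big)+\Delta_k,
\]
where the $j=0$ contribution vanishes because $\alpha_k^{0}-\beta_k^{0}=0$ and $\Delta_k$ is the order-$\eta$ remainder of the difference. Collecting by $j$, the finite part of $\sum_k P\tilde V(w_k)(\dots)e_1$ becomes $\sum_{j=1}^{\eta}\sum_{k=1}^{N}\lambda_{k,j}\,P\tilde V(w_k)\tfrac{\tilde H(w_k)^{j}}{j!}e_1$ with $\lambda_{k,j}:=\alpha_k^{\,j}-\beta_k^{\,j}\ge 0$. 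The crucial point is that these weights telescope, $\sum_{k=1}^{N}\lambda_{k,j}=\alpha_1^{\,j}-\beta_N^{\,j}=\delta^{\,j}$, so for each $j$ the inner sum is $\delta^{\,j}$ times a convex combination of points $P\tilde V(u)\tfrac{\tilde H(u)^{j}}{j!}e_1$, $u\in\mathcal U$, hence lies in $\delta^{\,j}\,\mathtt{conv}\{P\tilde V(u)\tfrac{\tilde H(u)^{j}}{j!}e_1\mid u\in\mathcal U\}$. Summing over $j$ and using that $\mathtt{conv}$ commutes with Minkowski addition, the finite part is contained in $P\bigoplus_{j=1}^{\eta}\{\tilde V(u)\tilde H^{j}(u)\delta^{j}/j!\mid u\in\mathcal U\}\,e_1$ up to the convexification that the subsequent zonotope over-approximation absorbs; this is exactly the first term of the claimed bound.

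For the \textbf{error terms}, the Krylov errors add up to $\mathbf{[-1,1]}^n\sum_{k}\tilde\epsilon_\textrm{norm}(w_k)(\tau_k-\tau_{k-1})\subseteq\mathbf{[-1,1]}^n\,\delta\sup_{u\in\mathcal U}\tilde\epsilon_\textrm{norm}(u)$, which is enclosed by $\mathbf{[-1,1]}^n\{\tilde\epsilon_\textrm{norm}(u)\mid u\in\mathcal U\}\,\delta$. For the truncation remainders one uses $\alpha^{\,j}-\beta^{\,j}=(\alpha-\beta)\sum_{\ell=0}^{j-1}\alpha^{\ell}\beta^{\,j-1-\ell}\le j\,\delta^{\,j-1}(\alpha-\beta)$, so that $\|\Delta_k\|$ is \emph{first order} in the slice length $\tau_k-\tau_{k-1}$ (the tail $\sum_{j>\eta}$ converges, bounded as in \eqref{eq:matrixExponentialRemainder} applied to $\tilde H(w_k)$). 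Consequently $\sum_k P\tilde V(w_k)\Delta_k e_1$ does not grow with $N$ — the number of slices cancels against their lengths — leaving an $O(\delta^{\,\eta+1})$ bound of precisely the form $P\,\mathbf{[-1,1]}^{n\times n}\tilde\Phi(\delta,u)\,e_1$ (using that the columns of $\tilde V$ are orthonormal to pass from $\|\Delta_k\|$ to the vector bound); taking the supremum over $u\in\mathcal U$ yields the term $P\,\mathbf{[-1,1]}^{n\times n}\{\tilde\Phi(\delta,u)\mid u\in\mathcal U\}\,e_1$. Refining the partition and passing to the closure completes the proof.

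\textbf{Main obstacle.} The one genuinely non-routine step is this last error accounting: a naive bound expanding $e^{\tilde H(w_k)\alpha_k}$ and $e^{\tilde H(w_k)\beta_k}$ separately gives each slice an $O(\delta^{\,\eta+1})$ remainder, and summing $N\!\to\!\infty$ of them diverges; it is expanding the \emph{difference} — the factorization $\alpha^{\,j}-\beta^{\,j}=(\alpha-\beta)\sum_{\ell}\alpha^{\ell}\beta^{\,j-1-\ell}$ — that makes each slice's remainder first order in its length, so the Riemann sum telescopes to an $N$-independent, $\delta$-proportional bound. Secondary, bookkeeping points are: the per-$j$ blocks only sit in the convex hulls of the sets $\{\tilde V(u)\tilde H^{j}(u)\delta^{j}/j!\}$, which is harmless since $\mathtt{conv}$ commutes with $\oplus$ and the whole expression is over-approximated by a convex zonotope; and one must verify dimensions so that $P\tilde V(u)\tilde E(\delta,u)e_1$ and the Taylor-tail pieces fit inside the stated interval-vector/interval-matrix error terms. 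An alternative route — Taylor-expanding $e^{A(\delta-\tau)}$ in powers of $A$, integrating term by term with $\int_0^\delta(\delta-\tau)^{i}u(\tau)\mathtt{d}\tau\in\tfrac{\delta^{i+1}}{i+1}\mathcal U$ by convexity, then using $A^{j-1}u=P\tilde A(u)^{j}\tilde x_0$ and the Arnoldi relation — reaches the same bound but expresses the remainder through $\|A\|$, which the paper deliberately avoids, so the slice-wise route above is the natural one.
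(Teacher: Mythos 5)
Your proposal follows essentially the same route as the paper's proof in Appendix~\ref{app:inputSolution}: partition $[0,\delta]$ into piecewise-constant input slices, apply Corollary~\ref{thm:KrylovErrorInput_generalTimeBounds} to each slice, Taylor-expand the exponential differences, telescope the coefficients $(\delta-t_{k-1})^j-(\delta-t_k)^j$ to $\delta^j$ using convexity, and pass to the limit of infinitely fine partitions. Your additional care with the remainder terms (making each slice's truncation error first order in the slice length so the sum stays $N$-independent) and your explicit remark that the per-$j$ sums land only in the convex hull of $\{\tilde V(u)\tilde H^j(u)/j!\mid u\in\mathcal U\}$ are sound refinements of steps the paper treats more tersely, but they do not change the argument.
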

The proof can be found in Appendix~\ref{app:inputSolution}. Obviously, $\hat{\mathcal{R}}_p(\delta) \subseteq \mathcal{R}_p(\delta)$, since $\hat{\mathcal{R}}_p(\delta)$ is exact except for adding $\hat{\mathcal{R}}_{p,err}$, which is identically added to $\mathcal{R}_p(\delta)$, and $\mathcal{R}_p(\delta)$ also contains all solutions for non-constant inputs.

Next, we consider a particular solution of Theorem~\ref{thm:arbitraryInputSet} when $\mathcal{U}$ is a zonotope.

\begin{corollary}[Varying inputs with zonotopic bounds] \label{thm:arbitraryInputSet_zonotope}
 The reachable set due to uncertain inputs
 \begin{equation*}
 \begin{split}
  \mathcal{R}_p(\delta) = \Big\{x_p(\delta) \Big| & x_p(\delta)=\int_0^\delta e^{A(\delta-t)} u(t) \, dt, \\
  & \forall t\in[0, \delta]: u(t) \in \mathcal{U} \Big\}
  \end{split}
 \end{equation*}
 when $\mathcal{U} = (c_u,g_u\gen{1},\ldots,g_u\gen{q})$ is a zonotope, can be over-approximated by
\begin{equation*}
  \begin{split}
    \mathcal{R}_p(\delta) = & (\tilde{c}_u,\tilde{g}_u\gen{1,1},\ldots,\tilde{g}_u\gen{q,1},\tilde{g}_u\gen{1,2},\ldots,\tilde{g}_u\gen{q,2},\ldots,\tilde{g}_u\gen{q,\eta}) \\
    & \oplus \hat{\mathcal{R}}_{p,err}, \\
    \tilde{c}_u & = P \bigg( \sum_{j=1}^\eta  \tilde{V}(c_u)\frac{\tilde{H}^j(c_u)}{j!}\delta^j\bigg) e_1, \\
    \tilde{g}_u\gen{i,j} & = P \tilde{V}(g_u\gen{i})\frac{\tilde{H}^j(g_u\gen{i})}{j!}\delta^j e_1, \\
    \mathcal{R}_{p,err} & = P \bigg(\tilde{\mathcal{E}}(\delta, c_u) \oplus \bigoplus_{i=1}^q \tilde{\mathcal{E}}(\delta, g_u\gen{i}) \bigg) e_1 \\
    & \oplus \mathbf{[-1,1]}^n \Big( \tilde{\epsilon}_\textrm{norm}(c_u) + \sum_{i=1}^q \tilde{\epsilon}_\textrm{norm}(g_u\gen{i}) \Big) \delta.
  \end{split}
\end{equation*}  
\end{corollary}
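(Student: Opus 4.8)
The plan is to derive Corollary~\ref{thm:arbitraryInputSet_zonotope} from Theorem~\ref{thm:arbitraryInputSet} in exactly the way Theorem~\ref{thm:constUncertainInputs} was obtained from Lemma~\ref{thm:KrylovErrorInput}: insert the zonotope parametrisation $u = c_u + \sum_{i=1}^{q}\beta_i\, g_u\^{i}$, $\beta_i\in[-1,1]$, of the admissible constant inputs into the set-valued over-approximation of Theorem~\ref{thm:arbitraryInputSet}, and then distribute the Minkowski sums over the center and the generators of $\mathcal{U}$.

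The decisive observation concerns the ``Taylor part'' of the bound in Theorem~\ref{thm:arbitraryInputSet}: for every fixed index $j$, the map $u \mapsto P\,\tilde V(u)\tfrac{\tilde H^{j}(u)}{j!}e_1$ is \emph{linear} in $u$. Indeed, by the Arnoldi relation and the block structure of the augmented system of Lemma~\ref{thm:KrylovErrorInput},
\[
 P\,\tilde V(u)\,\tilde H^{j}(u)\,e_1 \;=\; A^{\,j-1}u ,
\]
i.e.\ the $j$-th Taylor coefficient of $\int_0^\delta e^{A t}\,\mathrm{d}t\,u$. Hence, for each $j$, the set $P\{\tilde V(u)\tfrac{\tilde H^{j}(u)}{j!}e_1 \mid u\in\mathcal{U}\}\delta^j$ is --- by \eqref{eq:zonoAdditionMultiplication} applied to the linear image of a zonotope --- itself a zonotope with center $P\,\tilde V(c_u)\tfrac{\tilde H^{j}(c_u)}{j!}\delta^j e_1$ and generators $\tilde g_u\^{i,j} = P\,\tilde V(g_u\^{i})\tfrac{\tilde H^{j}(g_u\^{i})}{j!}\delta^j e_1$, where each image is re-expressed through the Krylov data of $c_u$, resp.\ of the individual $g_u\^{i}$, by the same linearity. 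Forming the Minkowski sum over $j=1,\dots,\eta$ and using \eqref{eq:zonoAdditionMultiplication} once more concatenates all generators $\tilde g_u\^{i,j}$ and adds the $\eta$ centers to $\tilde c_u = P\big(\sum_{j=1}^{\eta}\tilde V(c_u)\tfrac{\tilde H^{j}(c_u)}{j!}\delta^j\big)e_1$, which reproduces the zonotope $(\tilde c_u,\tilde g_u\^{1,1},\dots,\tilde g_u\^{q,1},\dots,\tilde g_u\^{q,\eta})$ of the statement.

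It then remains to handle the two error sets $\mathbf{[-1,1]}^{n\times n}\{\tilde\Phi(\delta,u)\mid u\in\mathcal{U}\}$ and $\mathbf{[-1,1]}^{n}\{\tilde\epsilon_\textrm{norm}(u)\mid u\in\mathcal{U}\}\delta$. These are conservatively over-approximated by replacing the $u$-dependent scalar error bound by the sum of that bound evaluated at the center $c_u$ and at every generator $g_u\^{i}$ --- exactly the step already carried out in the proof of Theorem~\ref{thm:constUncertainInputs} --- giving $\{\tilde\Phi(\delta,u)\mid u\in\mathcal{U}\}\subseteq[0,\tilde\Phi(\delta,c_u)+\sum_{i=1}^{q}\tilde\Phi(\delta,g_u\^{i})]$ and the analogous bound for $\tilde\epsilon_\textrm{norm}$. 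Pushing $\mathbf{[-1,1]}^{n\times n}(\cdot)$ through $P(\cdot)e_1$ collapses the first term to an interval vector, which is added to the second; this is $\hat{\mathcal{R}}_{p,err}$. Since $\hat{\mathcal{R}}_{p,err}$ is a multidimensional interval and therefore a zonotope, its Minkowski sum with $(\tilde c_u,\tilde g_u\^{1,1},\dots,\tilde g_u\^{q,\eta})$ is again a zonotope, which is the claimed over-approximation.

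The main obstacle I expect is making the linearity step watertight: one has to invoke the Arnoldi exactness $\|v\|\,\tilde V(u)\,\tilde H^{j}(u)\,e_1 = \tilde A^{j}(u)\,v$ for indices $j$ below the Krylov order, together with the identity $P\,\tilde A^{j}(u)\,v = A^{\,j-1}u$ coming from the block form of $\tilde A(u)$ and of the initial vector $v$ used in Lemma~\ref{thm:KrylovErrorInput}, so that a single Arnoldi run per generator suffices and Minkowski addition may legitimately be distributed across the generators of $\mathcal{U}$. A secondary subtlety --- routine but deserving a word --- is the precise sub-additivity (and scaling) behaviour of the scalar error bounds $\tilde\Phi$ and $\tilde\epsilon_\textrm{norm}$ under the zonotope parametrisation, on which the closed form of $\hat{\mathcal{R}}_{p,err}$ rests.
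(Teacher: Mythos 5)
Your proposal is correct and follows essentially the same route as the paper: insert the zonotope parametrisation of $\mathcal{U}$ into Theorem~\ref{thm:arbitraryInputSet}, distribute the Minkowski sums over the center and the generators, and collapse the $\tilde{\Phi}$- and $\tilde{\epsilon}_\textrm{norm}$-terms into the interval vector $\hat{\mathcal{R}}_{p,err}$. The paper performs the distribution step without comment, whereas you supply the justification it tacitly relies on --- linearity of $u\mapsto P\,\tilde{V}(u)\tilde{H}^{j}(u)e_1 = A^{j-1}u$ via Arnoldi exactness (valid for $j$ below the Krylov order) --- which is a welcome elaboration rather than a different approach.
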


\begin{proof}
 After inserting the definition of a zonotope (Def.~\ref{def:zonotope}) into the result of Theorem~\ref{thm:arbitraryInputSet}, we obtain
 \begin{equation*}
  \begin{split}
    \mathcal{R}_p(\delta) \subseteq & P \bigg( \bigoplus_{j=1}^\eta \Big( \tilde{V}(c_u)\frac{\tilde{H}^j(c_u)}{j!} \\
    \oplus & \bigoplus_{i=1}^q [-1,1] \tilde{V}(g_u\gen{i})\frac{\tilde{H}^j(g_u\gen{i})}{j!}\Big)\delta^j \\
    \oplus & \underbrace{\tilde{\mathcal{E}}(\delta, c_u) \oplus \bigoplus_{i=1}^q [-1,1]  \tilde{\mathcal{E}}(\delta, g_u\gen{i})}_{= \tilde{\mathcal{E}}(\delta, c_u) \oplus \bigoplus_{i=1}^q \tilde{\mathcal{E}}(\delta, g_u\gen{i})} \bigg) e_1 \\
    \oplus & \underbrace{\mathbf{[-1,1]}^n \Big( \tilde{\epsilon}_\textrm{norm}(c_u) \oplus \bigoplus_{i=1}^q [-1,1] \tilde{\epsilon}_\textrm{norm}(g_u\gen{i}) \Big) \delta}_{=\mathbf{[-1,1]}^n \Big( \tilde{\epsilon}_\textrm{norm}(c_u) + \sum_{i=1}^q \tilde{\epsilon}_\textrm{norm}(g_u\gen{i}) \Big) \delta},
  \end{split}
 \end{equation*} 
 which can be simplified to
 \begin{equation*}
  \begin{split}
    \mathcal{R}_p(\delta) \subseteq & P \bigg( \sum_{j=1}^\eta  \tilde{V}(c_u)\frac{\tilde{H}^j(c_u)}{j!}\delta^j\bigg) e_1 \\
    \oplus & \bigoplus_{j=1}^\eta \bigoplus_{i=1}^q [-1,1] \bigg(P \tilde{V}(g_u\gen{i})\frac{\tilde{H}^j(g_u\gen{i})}{j!}\delta^j\bigg) e_1 \oplus \mathcal{R}_{p,err}.
  \end{split}
 \end{equation*} 
 This results in the zonotope of the theorem. Since $\mathcal{R}_{p,err}$ is a multidimensional interval, and thus also a zonotope, the addition of the two zonotopes $\mathcal{R}_{p,err}$ and $(\tilde{c}_u,\tilde{g}_u\gen{1,1},\ldots,\tilde{g}_u\gen{q,1},\tilde{g}_u\gen{1,2},\ldots,\tilde{g}_u\gen{q,2},\ldots,\tilde{g}_u\gen{q,\eta})$ results in the zonotope $\mathcal{R}_p(\delta)$.
\end{proof}

\section{Propagation} \label{sec:propagation}

So far, we have only described the computation of a reachable set for the first time interval. This section presents how the initial results are propagated for further consecutive time intervals $\tau_i$.

\subsection{Overall Algorithm} \label{sec:overallAlgorithm}

To grasp the implemented propagation scheme more easily, we start with one of the simplest propagation procedures (see \cite[Alg.~1]{Girard2005}): 
 \begin{align}
  \mathcal{R}_{h}(\tau_{k+1}) &= e^{A\delta} \mathcal{R}_{h}(\tau_{k}), \label{eq:homSolPropagation} \\
  \mathcal{R}_{p}(\tau_{k+1}) &= e^{A\delta} \mathcal{R}_{p}(\tau_{k}) \oplus \mathcal{R}_{p}(\tau_{0}). \label{eq:particulateSolGrowing}
 \end{align}
In order to keep the number of generators of zonotopes that are multiplied by the matrix exponential small, the wrapping-free approach from \cite{Girard2006} is used in this work as a basis; we later introduce modifications to make the best use of the proposed computation in the Krylov subspace. The wrapping-free approach modifies the above procedure by introducing the auxiliary reachable set $\mathcal{R}_{b}$, which is enclosed by an axis-aligned box: $\mathcal{R}_{p}(\tau_0) = \mathtt{box}(\mathcal{R}_{b}(\tau_0))$ (see line~\ref{line:firstBoxEnclosure} of Alg.~\ref{alg:reachsetLin} and \eqref{eq:boxEnclosure}). This has the effect that the representation of $\mathcal{R}_{p}(\tau_{k+1})$ in \eqref{eq:particulateSolGrowing} does not grow in complexity due to the Minkowski addition, but stays a simple axis-aligned box.
The modified computation of $\mathcal{R}_{p}(\tau_{k+1})$ according to \cite{Girard2006} is:
 \begin{align} 
  \mathcal{R}_b(\tau_{k+1}) &= e^{A\delta} \mathcal{R}_b(\tau_{k}), \label{eq:boxPropagation} \\
  \mathcal{R}_{p}(\tau_{k+1}) &= \mathcal{R}_p(\tau_{k}) \oplus \mathtt{box}(\mathcal{R}_b(\tau_{k+1})). \label{eq:inputPropagation}
 \end{align}
In order to take advantage of the fact that the Arnoldi iteration does not have to be recomputed for different times as shown in \eqref{eq:expApprox_time}, we further modify the computation of $\mathcal{R}_b(\tau_{k+1})$ to
\begin{equation}
 \mathcal{R}_b(\tau_{k+1}) = e^{A t_k} \mathcal{R}_b(\tau_{0}). \label{eq:boxPropagation_mod} 
\end{equation}
Another modification, which is not proposed in \cite{Girard2006}, is that we change \eqref{eq:homSolPropagation}: instead of computing the homogeneous solution for the first time interval only and then propagating it, we apply Theorem~\ref{thm:timeInterval} in each time step. The reason is that for large systems, one saves much computation time when the matrix exponential multiplication is performed with $\mathcal{R}_{h}(t_{k})$ instead of $\mathcal{R}_{h}(\tau_{k})$ due to a fewer number of generators, while the computation in Theorem~\ref{thm:timeInterval} is negligible when using zonotopes. Since the sets in Alg.~\ref{alg:reachsetLin} are indexed without explicitly indicating time, we distinguish sets representing points in time by an asterisk (see e.g., $\mathcal{R}^{*}_{h,1}$) from the ones representing time intervals.

To sum up, in the Krylov subspace, the computation of \eqref{eq:homSolPropagation} and \eqref{eq:boxPropagation_mod} is carried out by Theorem~\ref{thm:homSolution} (see line~\ref{line:homPropagation} and line~\ref{line:boxPropagation} in Alg.~\ref{alg:reachsetLin}). The propagation of the particulate solution \eqref{eq:inputPropagation} is realized in line~\ref{line:inputPropagation} of Alg.~\ref{alg:reachsetLin} and the homogeneous solution for a time interval is obtained in line~\ref{line:timeIntervalPropagation}. The aggregation of the homogeneous and the particulate solution are performed in line~\ref{line:initAggregation} and line~\ref{line:aggregation} of Alg.~\ref{alg:reachsetLin}. 

When only output values $y(t)=Cx(t) + Du$ are of interest, one can save substantial computation time by first multiplying the orthogonal bases $V(c)$, $V(g\gen{i})$, $\tilde{V}(c_u)$, and $\tilde{V}(g_u\gen{i})$ with $C$ to obtain $V(c):=C V(c)$, $V(g\gen{i}) := C V(g\gen{i})$, $\tilde{V}(c_u) := C \tilde{V}(c_u)$, and $\tilde{V}(g_u\gen{i}) := C\tilde{V}(g_u\gen{i})$ as also presented in e.g., \cite{Bak2018a}. Otherwise, one would first compute the full-dimensional reachable set and afterwards compute the projection instead of directly computing the reachable set of the output.

\begin{algorithm}
\caption{Compute $\mathcal{R}([0,t_f])$} \label{alg:reachsetLin}
\begin{algorithmic}[1]
	\Require State matrix $A$, input matrix $B$, initial set $\mathcal{X}_0$, input set $\mathcal{U}$, time step $\delta$, time horizon $t_f$
	\Ensure $\mathcal{R}([0,t_f])$ 
	\State $\mathcal{R}^*_{h,1} = \mathtt{theorem}\text{\texttt{\_\ref{thm:homSolution}}}(\mathcal{X}_0,A,\delta)$	
	\State $\mathcal{R}^*_{b,1} = \mathtt{corollary}\text{\texttt{\_\ref{thm:arbitraryInputSet_zonotope}}}(\mathcal{U},A,B,\delta)$ \label{line:firstInputSol}
	\State $\mathcal{R}_{p,1} = \mathtt{box}\big(\mathtt{conv}(\mathbf{0}^n,\mathcal{R}^*_{b,1})\big)$ \label{line:firstBoxEnclosure}
	\State $\mathcal{R}_{h,1} = \mathtt{theorem}\text{\texttt{\_\ref{thm:timeInterval}}}(\mathcal{X}_0,\mathcal{R}^*_{h,1},A,\delta)$
	\State $\mathcal{R}_1 = \mathcal{R}_{h,1} \oplus \mathcal{R}_{p,1}$ \label{line:initAggregation}
	
	\For{$k=1 \ldots \lceil t_f/\delta-1 \rceil$}
		\State $\mathcal{R}^*_{h,k+1} = \mathtt{theorem}\text{\texttt{\_\ref{thm:homSolution}}}(\mathcal{X}_0,A,(k+1)\delta)$ \label{line:homPropagation}
		\State $\mathcal{R}^*_{b,k+1} = \mathtt{theorem}\text{\texttt{\_\ref{thm:homSolution}}}(\mathcal{R}^*_{b,1},A,(k+1)\delta)$ \label{line:boxPropagation}
		\State $\mathcal{R}_{p,k+1} = \mathcal{R}_{p,k} \oplus \mathtt{box}(\mathcal{R}^*_{b,k+1})$ \label{line:inputPropagation}
		\State $\mathcal{R}_{h,k+1} = \mathtt{theorem}\text{\texttt{\_\ref{thm:timeInterval}}}(\mathcal{R}^*_{h,k},\mathcal{R}^*_{h,k+1},A,\delta)$ \label{line:timeIntervalPropagation}
		\State $\mathcal{R}_{k+1} = \mathcal{R}_{h,k+1} \oplus \mathcal{R}_{p,k+1}$ \label{line:aggregation}
	\EndFor
	\State $\mathcal{R}([0,t_f])=\bigcup_{k=1}^{t_f/\delta} \mathcal{R}_{k}$
\end{algorithmic}
\end{algorithm}

When only constant inputs are considered, $\mathtt{corollary}\text{\texttt{\_\ref{thm:arbitraryInputSet_zonotope}}}(A,B,\mathcal{U},\delta)$ in line~\ref{line:firstInputSol} is replaced by $\mathtt{theorem}\text{\texttt{\_\ref{thm:constUncertainInputs}}}(A,B,\mathcal{U},\delta)$. Also, when the center $u_c$ of the set of uncertain inputs $\mathcal{U}$ is large compared to the deviation from the center $\mathcal{U}_\Delta := \mathcal{U} \oplus (-u_c)$, one should move the solution of $u_c$ inside the convex hull computation performed in $\mathtt{theorem}\text{\texttt{\_\ref{thm:timeInterval}}}(A,\mathcal{X}_0,\delta)$ in line~\ref{line:timeIntervalPropagation} (see e.g., \cite[Alg.~1]{Althoff2014a}). This, however, is independent from the extension to compute in the Krylov subspace and thus not discussed in this work.

\subsection{Computational Complexity} \label{sec:computationalComplexity}

Let us first consider the computational complexity when applying Alg.~\ref{alg:reachsetLin} without utilizing the Krylov subspace: For each time step, we have to map zonotopes using the matrix exponential, compute the box enclosure as well as the convex hull of zonotopes, and add two zonotopes. We introduce the number of generators of $\mathcal{R}^*_{h,1}$ in Alg.~\ref{alg:reachsetLin} as $p_h$ and the number of generators of $\mathcal{R}^*_{b,1}$ as $p_b$. Thus, the orders of those zonotopes are $o_h = \frac{p_h}{n}$ and $o_b = \frac{p_b}{n}$. The number of required binary operations for each of the previously mentioned high-level operands are listed in Tab.~\ref{tab:zonotopeOperations}. We are interested in the complexity with respect to the dimension $n$, since the number of time steps is typically fixed or given by reaching a fixed-point. As can be seen from Tab.~\ref{tab:zonotopeOperations}, the complexity with respect to $n$ is cubic for linear maps (when $M$ is quadratic as for $e^{A \delta}$), linear for addition, quadratic for over-approximating the convex hull, and quadratic for the box enclosure. Thus, the overall complexity is dominated by the linear map, which has complexity $\mathcal{O}((o_h + o_b) \, n^3)$. Note that according to Alg.~\ref{alg:reachsetLin}, the order of the involved zonotopes does not grow compared to other propagation algorithms (see e.g., \cite{Girard2005, Althoff2010a}).

\begin{table}[b]
	\setlength{\tabcolsep}{4pt}
	\renewcommand{\arraystretch}{1.3}
	\centering
	\caption{Required operations.} 
	\begin{tabular}[c]{ll}
	\hline
		 \textbf{operands} \\ \hline\noalign{\vskip 0.2cm}   
		 \multicolumn{2}{l}{$\mathcal{Z}_h=(c_h, g_h\gen{1},\dots,g_h\gen{p_h}), \hat{\mathcal{Z}}_h=(\hat{c}_h, \hat{g}_h\gen{1},\dots,\hat{g}_h\gen{p_h})$}, \\
 		 \multicolumn{2}{l}{$\mathcal{Z}_p=(c_p, g_p\gen{1},\dots,g_p\gen{p_p}) \subset \mathbb{R}^{n}, M\in\mathbb{R}^{m\times n}$} \\[0.2cm]
	\hline
		 \textbf{operation} & \textbf{nr. of binary operations} \\ \hline\noalign{\vskip 0.2cm}   
		 $M \mathcal{Z}_h$, see \eqref{eq:zonoAdditionMultiplication} & $2 mn(o_h n + 1)$ \\[0.2cm]
		 $\mathcal{Z}_h \oplus \mathcal{Z}_p$, see \eqref{eq:zonoAdditionMultiplication} & $n$ \\[0.2cm]
		 $\mathtt{conv}(\mathcal{Z}_h,\hat{\mathcal{Z}}_h)$, see \eqref{eq:convexHull} & $2(o_h n+1)n$ \\[0.2cm]
		 $\mathtt{box}(\mathcal{Z}_h)$, see \eqref{eq:boxEnclosure} & $o_h n^2$ \\ \hline
	\end{tabular}
	\label{tab:zonotopeOperations}
\end{table}

The only difference when computing in the Krylov subspace is that the complexity of computing $e^{A\delta} \mathcal{Z}$ changes. As discussed above, this is also the operation which determines the overall computational complexity. We are discussing the case when the Krylov order is chosen such that the exponential matrix is computed up to machine precision so that the results have the same accuracy and the comparison is fair. The Arnoldi iteration requires $\xi$ matrix-vector multiplications ($\xi$ is the dimension of the Krylov subspace; see Alg.~\ref{alg:Arnoldi}). However, since the involved matrices are sparse, we can add a sparsity constant $s<<1$ defined as the fraction of non-zero entries so that we obtain $\mathcal{O}(s\xi n^2)$ operations. Further, we require $\xi^2/2$ inner products ($\mathcal{O}(\xi^2 \, n)$) and $\xi^2/2$ other operations with $\mathcal{O}(n)$ resulting in $\mathcal{O}(\xi^2 \, n)$, so that the overall complexity is $\mathcal{O}(\xi^2 \, n) + \mathcal{O}(s\xi n^2)$. 

The Arnoldi iteration has to be computed for $(o_h + o_b)n$ generators, so that the computational complexity amounts to $\mathcal{O}(\xi^2 (o_h + o_b)n^2 + s\xi(o_h + o_b)n^3)$ concerning the Arnoldi iteration. The required computation of each exponential matrix $e^{H\delta}$ in the Krylov subspace only depends on $\xi$ and not on $n$. Finally, we require matrix/matrix computations between $V$ and the exponential matrix $e^{H\delta}$ with complexity $\mathcal{O}(n \xi^2)$. 

Thus, the overall complexity using the Krylov technique is $\mathcal{O}(\xi^2 (o_h + o_b)n^2 + s\xi(o_h + o_b)n^3) + \mathcal{O}(n \xi^2) = \mathcal{O}(\xi^2 (o_h + o_b)n^2 + s\xi(o_h + o_b)n^3)$. As a result, both the classical approach and the Krylov approach are cubic in the number of continuous state variables. However, the Krylov method can be much faster (as demonstrated in the next section) depending on the sparsity of the system matrix $A$, since $\xi << n$ and typically $s\xi << 1$ (compare $\mathcal{O}((o_h + o_b) \, n^3)$ for the classical approach with $\mathcal{O}(\xi^2 (o_h + o_b)n^2 + s\xi(o_h + o_b)n^3)$ for the Krylov approach). Please note that the sparsity typically increases the larger a system is (see e.g., \cite[Fig.~3]{Williams2009}). Also, it should be noted that although $A$ is often very sparse, all entries of $e^{A\delta}$ are typically non-zero. Further, observations show that $\xi$ and $n$ are not linearly related for a constant error $\epsilon$, but that $\frac{n}{\xi}$ grows considerably with increasing $n$ \cite{Wang2017}.

\section{Numerical Experiments} \label{sec:numericalExperiment}

To demonstrate the usefulness of the presented approach, we compute reachable sets of benchmark models in \cite{Tran2017} and perform a formal analysis of a bridge as a representative of a safety-critical structure. To the best knowledge of the author, no work on formally bounding values of a safety-critical structure exists; the most prominent example of a failing safety-critical structure is the collapsed Tacoma Narrows Bridge \cite{Amman1941}.

\subsection{Models} \label{sec:models}

\paragraph{Benchmark Models}

Benchmarks for large linear systems have been proposed in \cite{Tran2017}. We have selected the FOM model and the MNA models; the MNA models are inspired by linear differential algebraic systems $E\dot{x} = Ax(t) + Bu(t)$ in \cite{Chahlaoui2002}. In \cite{Tran2017} only MNA-1 and MNA-5 are presented and the $E$ matrix is chosen as the identity matrix to obtain a ordinary differential equations. We have additionally included the models MNA-2 up to MNA-4 by also choosing $E$ as the identity matrix.

\paragraph{Bridge Model} 

The model of the Roosevelt Lake Bridge (Arizona) is taken from a student work \cite{Pisaroni2012}, which investigated its structural dynamics. A picture\footnote{The picture is taken from \href{https://commons.wikimedia.org/wiki/File:TheodoreRooseveltDam.003.140817.jpg}{commons.wikimedia.org}} of the bridge and the corresponding finite element model are shown in Fig.~\ref{fig:bridge}. The bridge is mostly made out of steel, except for the roadway deck. The dam structure in front of the bridge (see Fig.~\ref{fig:bridgePicture}) creates a special aerodynamic situation, which can excite certain natural frequencies of the bridge \cite{Davenport1990}. The bridge model consists of $445$ nodes connecting bars and beams. The exact model is provided as an example in CORA \cite{Althoff2015a}. Using the proposed reachability analysis, we can investigate the entire range of possible time responses for all kinds of frequencies of the exciting forces at once. Thus, we do not miss a single possible frequency that could cause a problem.

\begin{figure}[h!tb]	
    \centering	
    \footnotesize
      \psfrag{a}[c][c]{\shortstack{reference \\ trajectory}}
      \psfrag{b}[c][c]{\shortstack{vehicle I}}
      
      \subfigure[Picture of the bridge with the dam accelerating winds in the background.] 
	  {\includegraphics[width=\columnwidth]{./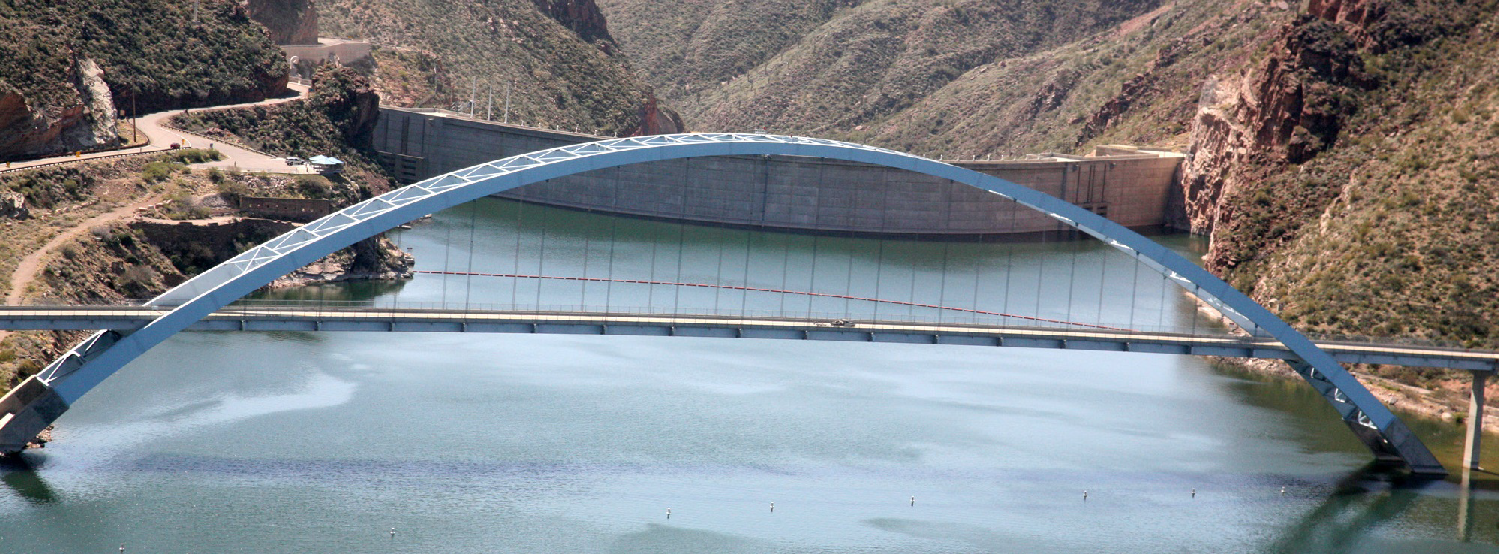} \label{fig:bridgePicture}}
	  
	    \psfrag{d}[c][c]{\shortstack{ego vehicle \\ previously verified part}}
	    \psfrag{h}[c][c]{\shortstack{ego vehicle \\ new intended part}}
	  
      \subfigure[Finite-element model of the bridge (distances are in meters).]        
	  {\includegraphics[width=\columnwidth]{./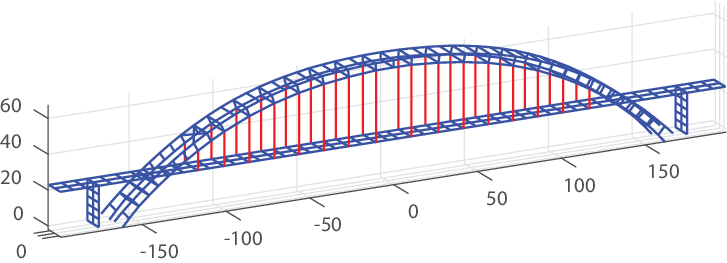} \label{fig:bridgeFEM}}

      \caption{Picture and finite-element model of the Roosevelt Lake Bridge (Arizona).}
			\label{fig:bridge}
\end{figure}

We consider two excitations of the bridge: (1) The bridge is excited by lateral forces simulating the winds acting on the bridge and the street deck and (2) vertical excitation is generated by the street deck caused by moving traffic on the bridge. It should be noted that we consider all kinds of frequencies of these excitations so that no possible solution is missed. Due to the large representation size of the input set $\mathcal{X}_0$ and the input vector $u$ ($5040$ dimensions), we provide those values within the uploaded example in CORA. 

\subsection{Results}

To be as precise as possible, we have chosen the order reduction so that $\|v\| \overline{\epsilon}_\textrm{norm}$ is below the precision of floating point numbers in MATLAB, which is $2.2204\cdot10^{-16}$. The results below are obtained using a standard laptop with an Intel i7-3520M CPU running at 2.90GHz and 8GB of RAM. 

\paragraph{Reachability Analysis}

A summary of the results for reachability analysis is presented in Tab.~\ref{tab:computationTimesAndParameters}. There, the computation times for reachability analysis in the untransformed space and in the Krylov subspace is shown. The computation times include all processes, including determining the proper order of the Krylov subspace. We also show the different computation times depending on whether the full state is required or only the outputs of the system. To better judge the results, we have also added the state dimension, the input dimension, the output dimension, the time step size, and the maximum Krylov order. Since the center and each generator might have different Krylov orders, we just present the maximum value. 

\begin{table*}[h!tb]
  \begin{center}
    \caption{Computation Times and Parameters (OoM: Out of Memory).}
    \label{tab:computationTimesAndParameters}
    \begin{tabular}{lcccccccccccc} \toprule
      & \multicolumn{4}{c}{\textbf{Computation times [s]}} & \multicolumn{8}{c}{\textbf{Model and computation parameters}}  \\ \cmidrule(lr{.75em}){2-5} \cmidrule(lr{.75em}){6-13}
      & \multicolumn{2}{c}{\textbf{States}} & \multicolumn{2}{c}{\textbf{Outputs}} & \multicolumn{4}{c}{\textbf{Dimension}} \\ \cmidrule(lr{.75em}){2-3} \cmidrule(lr{.75em}){4-5} \cmidrule(lr{.75em}){6-9}
      \textbf{Model} & \textbf{Standard} & \textbf{Krylov} & \textbf{Standard} & \textbf{Krylov} & \textbf{State} & \textbf{Input} & \textbf{Output} & \textbf{Krylov} & $\delta$ & $t_f$ & $\zeta$ & $\vartheta$ \\ \midrule
      FOM & 3070 & 308.7 & 3306 & 21.19 & 1006 & 1 & 1 & 120 & 10$^{-4}$ & 10$^{-1}$ & 10 & 0.1 \\ 
      MNA-1 & 83.26 & 94.83 & 85.30 & 82.48 & 578 & 9 & 9 & 240 & 10$^{-5}$ & 10$^{-3}$ & 100 & 0.1 \\ 
      MNA-2 & OoM & OoM & OoM & 1234 & 9223 & 18 & 18 & 700 & 10$^{-5}$ & 10$^{-3}$ & 10 & 0.1 \\ 
      MNA-3 & 4011.99 & 1601 & 3987.14 & 867.1 & 4863 & 22 & 22 & 780 & 10$^{-5}$ & 10$^{-3}$ & 10 & 0.1 \\ 
      MNA-4 & 344.1 & 44.11 & 333.7 & 21.36 & 980 & 4 & 4 & 260 & 10$^{-5}$ & 10$^{-3}$ & 10 & 0.1 \\ 
      MNA-5 & OoM & 2850 & OoM & 180.6 & 10913 & 9 & 9 & 440 & 10$^{-1}$ & 10$^{+1}$ & 10 & 0.1 \\ 
      Bridge & 490912 & 3835 & 490912 & 3835 & 5040 & 2 & 5040 & 380 & 10$^{-7}$ & 10$^{-4}$ & 0.001 & 0.1 \\ 
      \bottomrule
    \end{tabular}
  \end{center}
\end{table*}

In order to determine the proper Krylov order fulfilling the error $\|v\| \overline{\epsilon}_\textrm{norm}$ in this work, we iteratively increment the Krylov order by $20$ starting from $1$. Please note that the step sizes are rather small since we consider the full models and do not reduce them by removing higher frequencies. For all models we have chosen the initial values of the first ten state variables to be uncertain within the interval $\zeta[-1,1]$ and all inputs to be uncertain within the interval $\vartheta[-1,1]$.  

Selected plots of reachable sets over time of the benchmark examples are presented in Fig.~\ref{fig:benchmarks}. The results of the bridge model are illustrated using selected two-dimensional projections of the reachable set in Fig.~\ref{fig:reachSet}.

\begin{figure}[h!tb]	
    \centering	
      \subfigure[FOM model.] 
	  {\includegraphics[width=0.48\columnwidth]{./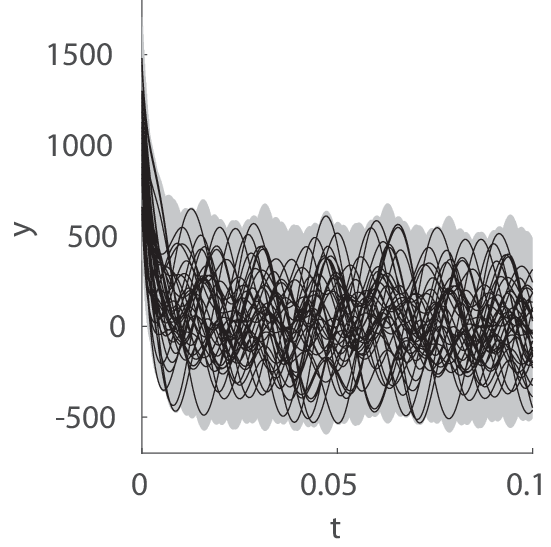} \label{fig:fom}}
      \subfigure[MNA-5 model.]        
	  {\includegraphics[width=0.48\columnwidth]{./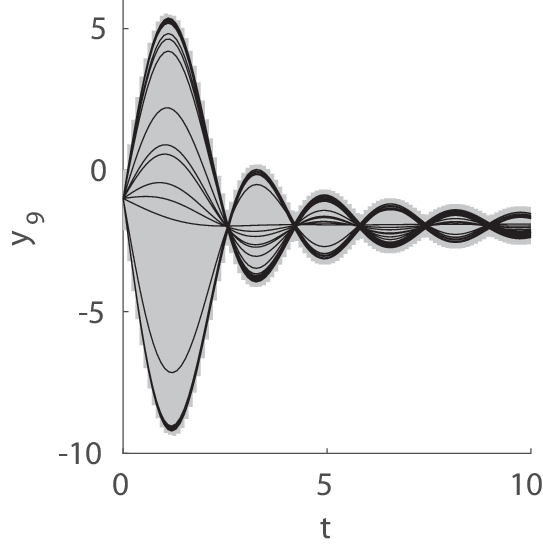} \label{fig:MNA-5}}
      \caption{Reachable set of outputs for selected benchmark models plotted over time. Black lines show random simulations, the gray area shows the reachable set.}
      \label{fig:benchmarks}
\end{figure}

\begin{figure*}[h!tb]	
    \centering	
    \begin{minipage}{2.1\columnwidth}
      \includegraphics[width=0.24\columnwidth]{./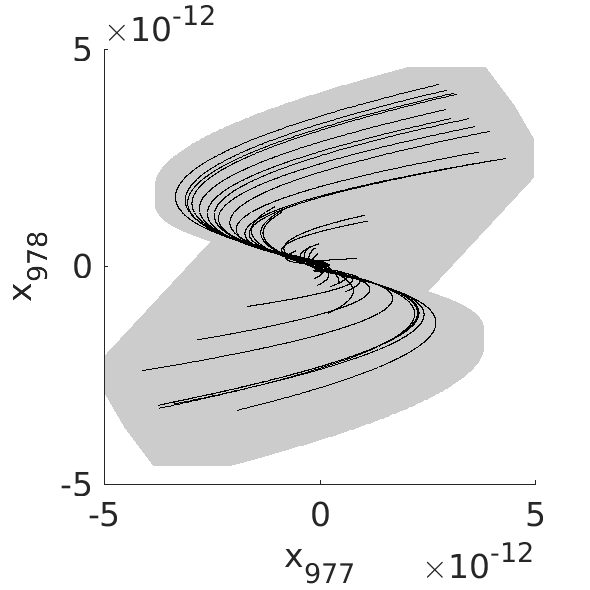} 
      \includegraphics[width=0.24\columnwidth]{./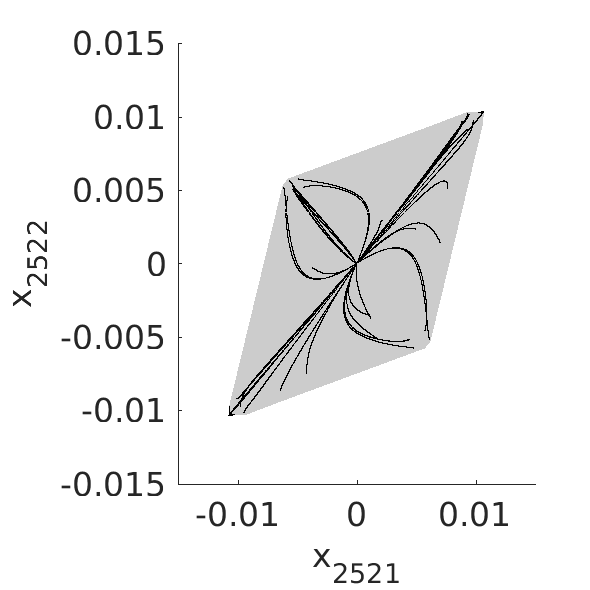} 
      \includegraphics[width=0.24\columnwidth]{./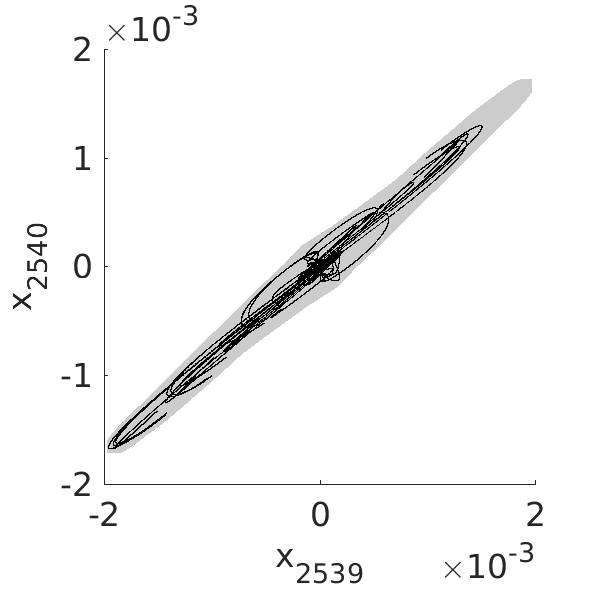} 
      \includegraphics[width=0.24\columnwidth]{./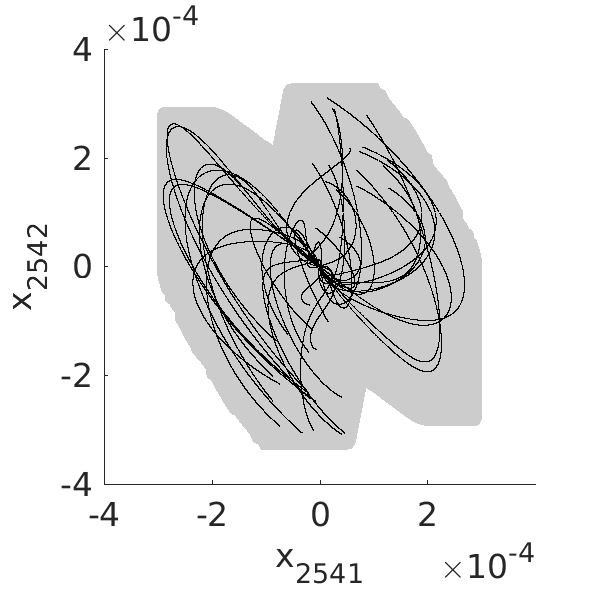} \\[0.2cm] 
      \includegraphics[width=0.24\columnwidth]{./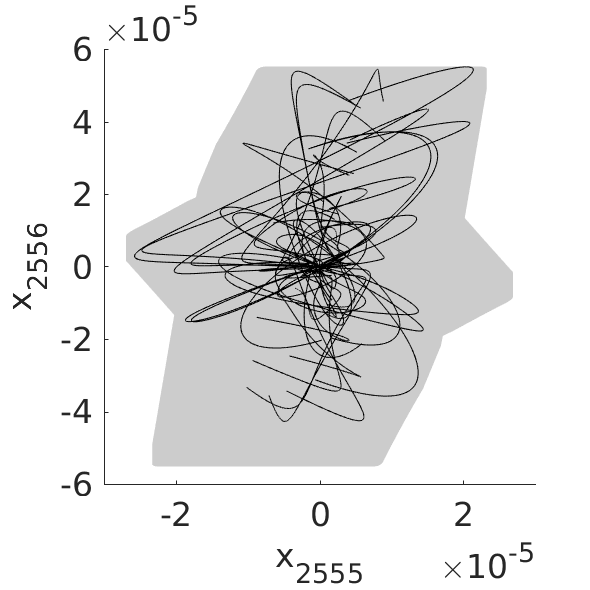} 
      \includegraphics[width=0.24\columnwidth]{./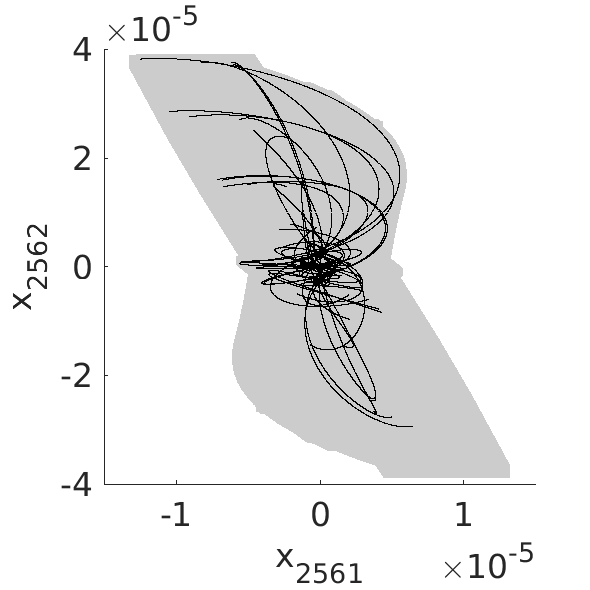} 
      \includegraphics[width=0.24\columnwidth]{./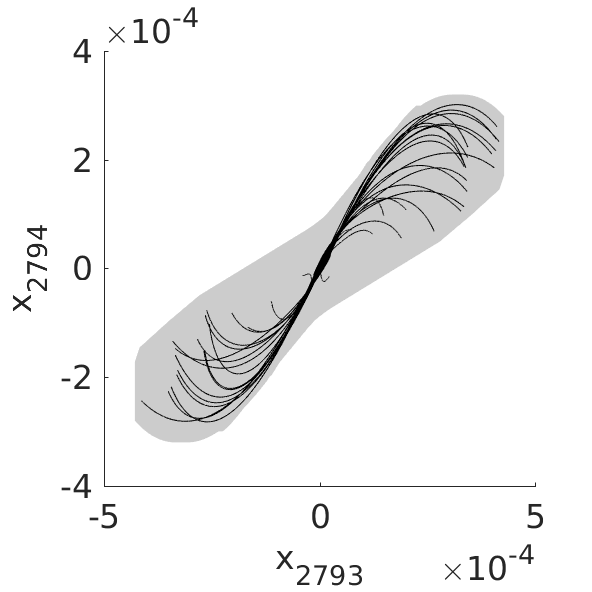} 
      \includegraphics[width=0.24\columnwidth]{./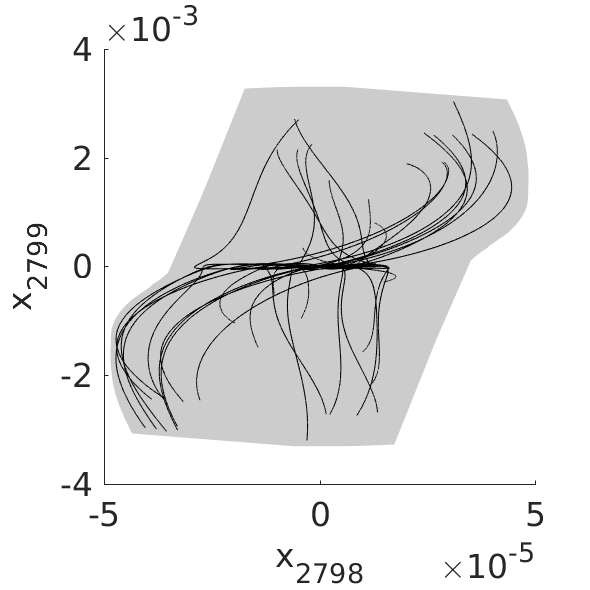} \\[0.2cm] 
      \includegraphics[width=0.24\columnwidth]{./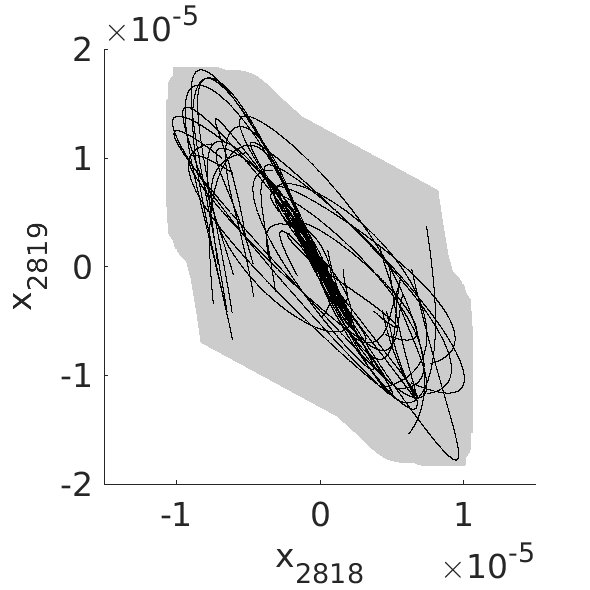} 
      \includegraphics[width=0.24\columnwidth]{./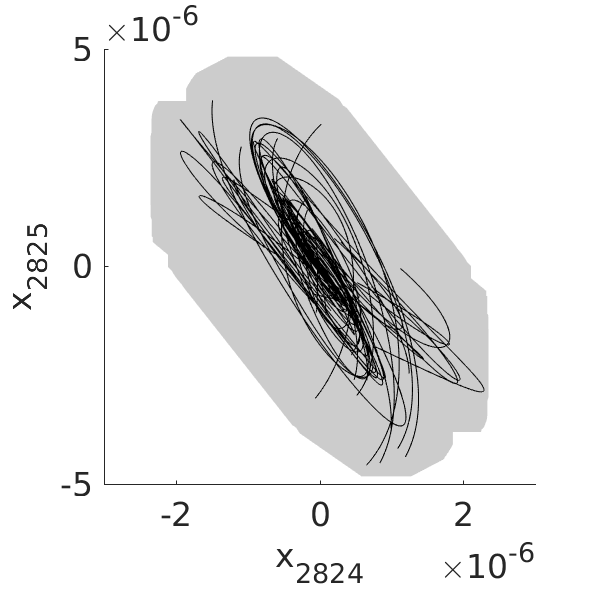} 
      \includegraphics[width=0.24\columnwidth]{./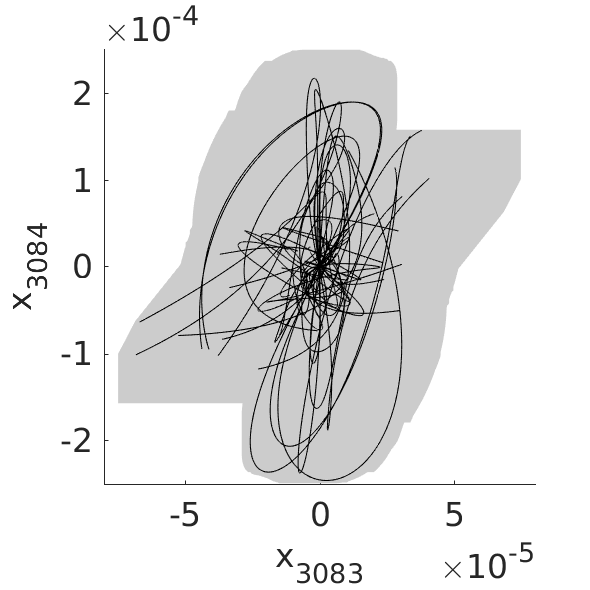} 
      \includegraphics[width=0.24\columnwidth]{./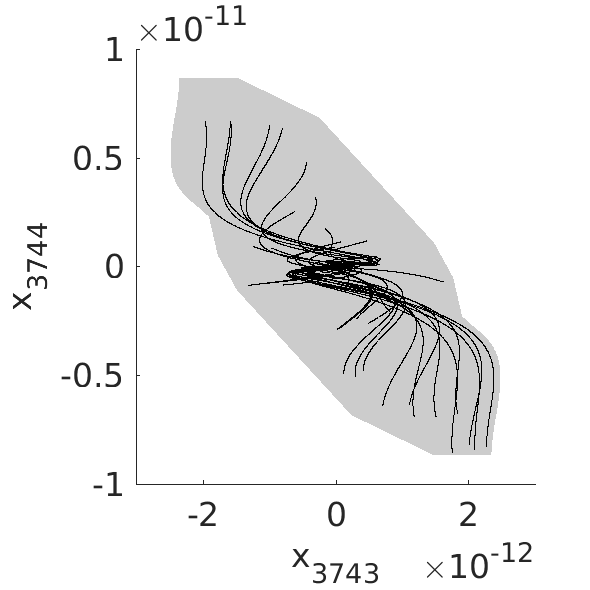} 
    \end{minipage}

      \caption{Two-dimensional projections of reachable sets for selected projections of the bridge model. Black lines show random simulations, the gray area shows the reachable set.}
      \label{fig:reachSet}
\end{figure*}

\paragraph{Verification} While the focus of this work is on reachability analysis, we also briefly demonstrate a possible verification of the bridge for which we check whether all states stay within certain axis-aligned bounds. Since the bounds consist of $2\cdot5040=10080$ values, we will only provide them within CORA. In the example, the reachable set respects the given bounds; checking the bounds for all time interval takes around $20$~s, which is small compared to $3835$~s for the reachability analysis.

\paragraph{Discussion} 

Overall, the computational benefits of the Krylov methods typically improve with the size of the model as shown in Tab.~\ref{tab:computationTimesAndParameters}. When only the output is of importance, the Krylov method is particularly efficient since one can directly include the output matrix $C$ in the Krylov method by using the orthonormal basis $V(v):=C V(v)$ as discussed in Sec.~\ref{sec:overallAlgorithm} (see also \cite{Bak2018a}). This avoids computing the reachable set of all states followed by projecting the result onto the outputs as required by standard methods. For the FOM model and the bridge model, the Krylov method is around 100 times faster, while the improvements for the MNA models are also substantial---only the small MNA-1 model shows similar computation times for both models. 

In general, the computation times of the Kylov method are less predictable compared to the standard approach given the dimension of the system. While the computation times of the standard approach are typically monotonically increasing with the system dimension, the Krylov methods sometimes provides faster results for larger systems.


\section{Conclusions} 
\label{sec:conclusions}

We have presented the first work for computing the over-approximative reachable set of linear systems in the Krylov subspace for arbitrarily-varying, bounded inputs. Unlike most other work on applying reduction techniques for reachability analysis, which compute an output abstraction, we perform a state abstraction and can fully reconstruct the complete reachable sets. When using output abstractions, only the reachable outputs can be over-approximated---if those output-abstraction techniques would be used to reconstruct the whole reachable set, no reduction would be achieved. Due to the strict consideration of error bounds, our approach can be used for formal verification and other formal techniques, such as computation of invariance sets, computation of the region of attraction, optimization of constrained systems with uncertainties, set-based observers, and conformance checking. In our numerical example we have seen a speed up of two orders of magnitude although the approximation was accurate up to floating point precision.


\section*{Acknowledgment}

This work was partly supported by the German Research Foundation (DFG) under grant number AL 1185/5-1.

%
%
%
%

%


\appendices

\section{Proof of Theorem~\ref{thm:arbitraryInputSet}} \label{app:inputSolution}

\begin{proof}
We first compute an over-approximative reachable set resulting from inputs when assuming constant inputs for time intervals $[\hat{t}_{k-1},\hat{t}_{k}[$, where $0=\hat{t}_0 < \hat{t}_1 < \ldots < \hat{t}_{l-1} < \hat{t}_{l}=\delta$, of equal duration $\rho = \hat{t}_{k} - \hat{t}_{k-1}$, so that
\begin{equation*}
 u(t) = \begin{cases}
         u(0) & \text{for } t\in[0,\hat{t}_1[ \\
         u(\hat{t}_1) & \text{for } t\in[\hat{t}_1,\hat{t}_2[ \\
         \vdots & \\
         u(\hat{t}_{l-1}) & \text{for } t\in[\hat{t}_{l-1},\delta].
        \end{cases}
\end{equation*}
In a second step, we let $\rho \to 0$ to obtain an over-approximation for arbitrarily varying inputs. The solution of piecewise constant inputs is obtained from
  \begin{equation*}
  \begin{split}
	  x_\mathtt{p,pw}(\delta) = & \int_{0}^{t_1} e^{A(\delta - t)} dt \, u(0) + \int_{t_1}^{t_2} e^{A(\delta-t)} dt \, u(t_1) \\ 
	  & + \ldots + \int_{t_{l-1}}^{\delta} e^{A(\delta-t)} dt \, u(t_{l-1}).
  \end{split}
  \end{equation*}
When the set of inputs is uncertain within $\mathcal{U}$, we obtain
  \begin{equation}\label{eq:piecewiseConstInputSolution_set}
  \begin{split}
	  \hat{\mathcal{R}}_{p,l}(\delta) = & \int_{0}^{t_1} e^{A(\delta - t)} \, dt \, \mathcal{U} 
	  \oplus \ldots \oplus \int_{t_{l-1}}^{\delta} e^{A(\delta-t)} \, dt \, \mathcal{U}.
  \end{split}
  \end{equation}
  
In order to obtain not only an approximation, but an over-approximation, the solution for a time interval $[t_{k-1},t_{k}[$ is further abstracted. From Corrollary~\ref{thm:KrylovErrorInput_generalTimeBounds} we have that
\begin{equation*}
\begin{split}
  & \hat{\mathcal{R}}_p([t_0, t_e]) := \int_{t_0}^{t_e} e^{A (\delta - t)} \mathtt{d}t \, \mathcal{U} \\
  \subseteq & P \Big\{\tilde{V}(u) \big(e^{\tilde{H}(u) \, (\delta - t_0)} - e^{\tilde{H}(u) \, (\delta - t_e)}\big) \Big| u\in\mathcal{U}\Big\}e_1 \\
  & \oplus \mathbf{[-1,1]}^n \Big\{\tilde{\epsilon}_\textrm{norm}(u) \Big| u\in\mathcal{U}\Big\} (t_e - t_0).
\end{split}
\end{equation*} 
The above over-approximation is rewritten using a finite Taylor series (see \eqref{eq:taylorSeries}):
\begin{equation*}
\begin{split}
  & \hat{\mathcal{R}}_p([t_0, t_e]) \subseteq P \Big\{\tilde{V}(u) \Big(I + \frac{\tilde{H}^1(u)}{1!}(\delta - t_0)^{1} + \ldots \\
  & + \frac{\tilde{H}^\eta(u)}{\eta!}(\delta - t_0)^{\eta} + \tilde{E}(\delta - t_0,u) - \Big(I + \frac{\tilde{H}^1(u)}{1!}(\delta - t_e)^{1}  \\
  & + \ldots + \frac{\tilde{H}^\eta(u)}{\eta!}(\delta - t_e)^{\eta} + \tilde{E}(\delta - t_e,u)\Big)\Big) \Big| u\in\mathcal{U}, \\
  & \tilde{E}(t,u) \in \tilde{\mathcal{E}}(t,u)\Big\}e_1 \oplus \mathbf{[-1,1]}^n \Big\{\tilde{\epsilon}_\textrm{norm}(u) \Big| u\in\mathcal{U}\Big\} (t_e - t_0) \\
  = & P \Big\{\tilde{V}(u) \frac{\tilde{H}^1(u)}{1!}[(\delta - t_0)^{1} - (\delta - t_e)^{1}] + \ldots \\
  & + \tilde{V}(u)\frac{\tilde{H}^\eta(u)}{\eta!}[(\delta - t_0)^{\eta} - (\delta - t_e)^{\eta}] \\
  & + \tilde{E}(\delta - t_0,u) - \tilde{E}(\delta - t_e,u) \Big| u\in\mathcal{U}, \\
  & \tilde{E}(t,u) \in \tilde{\mathcal{E}}(t,u)\Big\}e_1 \oplus \mathbf{[-1,1]}^n \Big\{\tilde{\epsilon}_\textrm{norm}(u) \Big| u\in\mathcal{U}\Big\} (t_e - t_0).
\end{split}
\end{equation*}

Finally, the uncertainty of the input is moved inwards, which results in a further over-approximation:
\begin{equation}\label{eq:partialInputSolutionTaylor}
\begin{split}
  & \hat{\mathcal{R}}_p([t_0, t_e]) \subseteq P \bigg( \\
  & \Big\{\tilde{V}(u) \frac{\tilde{H}^1(u)}{1!} \Big| u\in\mathcal{U}\Big\} [(\delta - t_0)^{1} - (\delta - t_e)^{1}] \oplus \ldots \\
  \oplus & \Big\{ \tilde{V}(u)\frac{\tilde{H}^\eta(u)}{\eta!} \Big| u\in\mathcal{U}\Big\} [(\delta - t_0)^{\eta} - (\delta - t_e)^{\eta}] \\
  \oplus & \Big\{ \tilde{E}(\delta - t_0,u) - \tilde{E}(\delta - t_e,u) \Big| u\in\mathcal{U}, \tilde{E}(t,u) \in \tilde{\mathcal{E}}(t,u)\Big\}\\
  & \bigg) e_1 \oplus \mathbf{[-1,1]}^n \Big\{\tilde{\epsilon}_\textrm{norm}(u) \Big| u\in\mathcal{U}\Big\} (t_e - t_0).
\end{split}
\end{equation}

After introducing 
\begin{equation*}
 \mathcal{D}\gen{j}:= \Big\{ \tilde{V}(u)\frac{\tilde{H}^j(u)}{j!} \Big| u\in\mathcal{U}\Big\},
\end{equation*}
one can rewrite \eqref{eq:partialInputSolutionTaylor} as
\begin{equation}\label{eq:partialInputSolution}
\begin{split}
  & \hat{\mathcal{R}}_p([t_0, t_e]) = \int_{t_0}^{t_e} e^{A (\delta - t)} \mathtt{d}t \, \mathcal{U} \\
  \subseteq & P \bigg( \mathcal{D}\gen{1}[(\delta - t_0)^{1} - (\delta - t_e)^{1}] \oplus \ldots \\
  & \oplus \mathcal{D}\gen{\eta} [(\delta - t_0)^{\eta} - (\delta - t_e)^{\eta}] \\
  & \oplus \Big\{ \tilde{E}(\delta - t_0,u) - \tilde{E}(\delta - t_e,u) \Big| u\in\mathcal{U}, \tilde{E}(t,u) \in \tilde{\mathcal{E}}(t,u)\Big\} \\
  & \bigg) e_1 \oplus \mathbf{[-1,1]}^n \Big\{\tilde{\epsilon}_\textrm{norm}(u) \Big| u\in\mathcal{U}\Big\} (t_e - t_0).
\end{split}	
\end{equation}
 By repeatedly inserting \eqref{eq:partialInputSolution} into \eqref{eq:piecewiseConstInputSolution_set}, rearranging time intervals $(\delta - t_{k}) = t_{l-k}$, and using $\tilde{E}(t,u) \in \tilde{\mathcal{E}}(t, u) = [-\tilde{W}(t, u), \tilde{W}(t, u)]$, where $\tilde{W}(t, u)= \sum_{i=\eta+1}^{\infty} \frac{t^i}{i!}|\tilde{H}(u)|^i$ from \eqref{eq:matrixExponentialRemainder}, we obtain 
 \begin{equation} \label{eq:inputSetPiecewiseConstInput}
  \begin{split}
  \hat{\mathcal{R}}_{p,l}(\delta) & = P \bigg( \\
     & \begin{array}{rllll}
    & (\mathcal{D}\gen{1}(t_l - t_{l-1}) & \oplus \ldots  & \oplus \mathcal{D}\gen{\eta} (t_l^{\eta} - t_{l-1}^{\eta}) \\
  \oplus & (\mathcal{D}\gen{1}(t_{l-1} - t_{l-2}) & \oplus \ldots  & \oplus\mathcal{D}\gen{\eta} (t_{l-1}^{\eta} - t_{l-2}^{\eta})\\
  \oplus & \ldots \\
  \oplus & \underbrace{(\mathcal{D}\gen{1}(t_1 - t_{0})}_{\bigoplus} & \oplus \ldots & 
  \oplus \underbrace{\mathcal{D}\gen{\eta}(t_1^{\eta} - t_{0}^{\eta})}_{\bigoplus} 
  \end{array} \\
  & \begin{array}{rllll}
  \oplus \Big\{ & \bigoplus_{i=\eta+1}^{\infty} \frac{t_l^i - t_{l-1}^i}{i!} \big[-|\tilde{H}(u)|^i, |\tilde{H}(u)|^i \big] \\
  \oplus & \bigoplus_{i=\eta+1}^{\infty} \frac{t_{l-1}^i - t_{l-2}^i}{i!} \big[-|\tilde{H}(u)|^i, |\tilde{H}(u)|^i \big] \\
  \oplus & \ldots \\
  \oplus & \underbrace{\textstyle \bigoplus_{i=\eta+1}^{\infty} \frac{t_{1}^i - t_{0}^i}{i!} \big[-|\tilde{H}(u)|^i, |\tilde{H}(u)|^i \big]}_{\bigoplus} \\
  \end{array} \\
  & \Big| u\in\mathcal{U}\Big\}\bigg) e_1 \\
  & \begin{array}{rllll}
  \oplus & \mathbf{[-1,1]}^n \Big\{\tilde{\epsilon}_\textrm{norm}(u) \Big| u\in\mathcal{U}\Big\} (t_{l} - t_{l-1}) \\
  \oplus & \mathbf{[-1,1]}^n \Big\{\tilde{\epsilon}_\textrm{norm}(u) \Big| u\in\mathcal{U}\Big\} (t_{l-1} - t_{l-2}) \\
  \oplus & \ldots \\
  \oplus & \mathbf{[-1,1]}^n \Big\{\tilde{\epsilon}_\textrm{norm}(u) \Big| u\in\mathcal{U}\Big\} \underbrace{(t_1 - t_{0})}_{\sum}. \\
    \end{array} \\
   \end{split}
  \end{equation}  
The summation symbols indicate that the terms written in one column are summed up. Since the expressions $t_k^{j} - t_{k-1}^{j}$ are positive scalars, the following statement can be used for the summation: 
For any two positive scalars $a,b\in\mathbb{R}^+$ and the convex set $\mathcal{S}$, one can state that $a\mathcal{S}\oplus b\mathcal{S}=(a+b)\mathcal{S}$. From this follows that
\begin{equation}\label{eq:inputSolutionColumnSummation}
\begin{split}
  & \mathcal{D}\gen{j}(t_1^j-t_0^j) \oplus \mathcal{D}\gen{j}(t_2^j-t_1^j)\oplus \ldots \oplus \mathcal{D}\gen{j}(t_l^j-t_{l-1}^j) \\
  &= \mathcal{D}\gen{j}(t_l^j-t_{0}^j) = \mathcal{D}\gen{j}t_l^j = \mathcal{D}\gen{j}\delta^j 
\end{split}
\end{equation}
and similarly for the Taylor remainder terms
\begin{equation} \label{eq:remainderSolutionColumnSummation}
\begin{split}
  & \bigoplus_{i=\eta+1}^{\infty} \frac{t_{1}^i - t_{0}^i}{i!} \big[-|\tilde{H}(u)|^i, |\tilde{H}(u)|^i \big] \oplus \ldots \oplus \\ 
  & \bigoplus_{i=\eta+1}^{\infty} \frac{t_l^i - t_{l-1}^i}{i!} \big[-|\tilde{H}(u)|^i, |\tilde{H}(u)|^i \big] \\
  &= \bigoplus_{i=\eta+1}^{\infty} \frac{t_l^i - t_{0}^i}{i!} \big[-|\tilde{H}(u)|^i, |\tilde{H}(u)|^i \big] = \tilde{\mathcal{E}}(\delta, u)
\end{split}
\end{equation}
and the Krylov error terms
\begin{equation}\label{eq:KrylovErrorSummation}
\begin{split}
  & \mathbf{[-1,1]}^n \Big\{\tilde{\epsilon}_\textrm{norm}(u) \Big| u\in\mathcal{U}\Big\} (t_1 - t_0) \oplus \ldots \oplus  \\
  & \mathbf{[-1,1]}^n \Big\{\tilde{\epsilon}_\textrm{norm}(u) \Big| u\in\mathcal{U}\Big\} (t_l - t_{l-1}) \\
  &= \mathbf{[-1,1]}^n \Big\{\tilde{\epsilon}_\textrm{norm}(u) \Big| u\in\mathcal{U}\Big\} \delta.
\end{split}
\end{equation}
Inserting \eqref{eq:inputSolutionColumnSummation} - \eqref{eq:KrylovErrorSummation} into \eqref{eq:inputSetPiecewiseConstInput} yields
\begin{equation*}
  \begin{split}
    \hat{\mathcal{R}}_{p,l}(\delta) = & P \bigg( \mathcal{D}\gen{1} \delta \oplus \ldots \oplus \mathcal{D}\gen{\eta} \delta^{\eta} \\
  \oplus & \Big\{\tilde{\mathcal{E}}(\delta, u) \Big| u\in\mathcal{U}\Big\} \bigg) e_1 \\
  \oplus & \mathbf{[-1,1]}^n \Big\{\tilde{\epsilon}_\textrm{norm}(u) \Big| u\in\mathcal{U}\Big\} \delta.
   \end{split}
  \end{equation*}  
Since the above result is independent of the number of intermediate time steps $l$, we can choose $l\to\infty$, meaning that 
\begin{equation*}
\begin{split}
  \lim_{l \to \infty} \hat{\mathcal{R}}_{p,l}(\delta) = \mathcal{R}_{p}(\delta).
\end{split}
\end{equation*}
\end{proof}

\section{Error Bound for the Approximation of Matrix Exponentials} \label{app:errorBound}

In this work, we use the a-posteriori bound from \cite[Eq.~4.1]{Jia2015}. A method for efficient computation of this bound is presented in the subsequent proposition.

\begin{proposition}
 The error bound 
 \begin{equation*}
  \forall t \in[0, t_f]: \big\|e^{Ct}v - \|v\| V e^{Ht} e_1\big\| \leq \|v\| \overline{\epsilon}_\textrm{norm} \, t
 \end{equation*}
 in \eqref{eq:KrylovErrorBound} holds for
 \begin{equation*}
 \begin{split}
  \overline{\epsilon}_\textrm{norm} = & h_{\xi+1,\xi} \, \overline{\omega} \, \overline{\varphi}, \\
  \overline{\varphi} = & \begin{cases}
			  \frac{e^{\nu(C)t_f}-1}{\nu(C) t_f} & \text{, for } \nu(C)> 0, \\
			  1 & \text{, otherwise}.
			 \end{cases} \\
  \overline{\omega} = & \max_{k\in\{1,\ldots,\lceil t_f/\delta\rceil \}}(\omega_k), \\
  \omega_k = & e_\xi^T e^{H t_k}e^{H[0,\delta]}e_1, \\
  H[0,\delta] = & \{0.5 H \delta + \beta 0.5 H \delta|\beta \in [-1,1] \},
 \end{split}
 \end{equation*}
 where $H[0,\delta]$ can be represented as a matrix zonotope, i.e., a zonotope whose center and generators are replaced by matrices; the exponential of a matrix zonotope $e^{H[0,\delta]}$ can be computed as presented in \cite{Althoff2011b}. Further, $\nu(C) := \lambda_{max}(0.5(C+C^*))$ returns the largest eigenvalue and $C^*$ is the conjugate transpose of $C$. Please note that the computation of $\nu(C)$ can be done efficiently since $0.5(C+C^*)$ is a real, sparse, symmetric matrix for which efficient eigenvalue computations \cite{Lehoucq1996,Sorensen1992} exist. Results can be obtained up to machine precision \cite{Simon1984} and error bounds are derived in \cite{Paige1980,Paige1976}. Finally, $e_\xi$ is the unit vector whose $\th{\xi}$ value is one and the values of $h_{\xi+1,\xi}$ and $H$ are taken from Alg.~\ref{alg:Arnoldi}.
\end{proposition}
\begin{proof}
 The a-posteriori bound from \cite[Eq.~4.1]{Jia2015} is
\begin{equation} \label{eq:aPosterioriJia}
\begin{split}
 & \big\|e^{Ct_f}v - \|v\| V e^{Ht_f} e_1\big\| \\
 \leq & \|v\| h_{\xi+1,\xi} \max_{t\in[0,t_f]} |e_\xi^T e^{Ht}e_1|\frac{e^{\nu(C)t_f}-1}{\nu(C)}. 
\end{split}
\end{equation}
Please note that some signs are changed compared to \cite[Eq.~4.1]{Jia2015} since in that work, the system is specified as $\dot{x} = e^{-Ct}$ instead of $\dot{x} = e^{Ct}$. 

\paragraph{Error bound is linear in time} 

The error bound can be enclosed by a function that is linear in time. From \eqref{eq:aPosterioriJia} we can derive the following bound: 
\begin{equation} \label{eq:separationOfMaximizations}
\begin{split}
 & \forall t \in[0, t_f]:  \big\|e^{Ct}v  - \|v\| V e^{Ht} e_1\big\| \\
 \leq & \|v\| h_{\xi+1,\xi} \max_{\hat{t}\in[0,t]} \Big(|e_\xi^T e^{H\hat{t}}e_1|\Big)\frac{e^{\nu(C)t}-1}{\nu(C)} \\
 = & t \, \|v\| h_{\xi+1,\xi} \max_{\hat{t}\in[0,t]} \Big(|e_\xi^T e^{H\hat{t}}e_1|\Big)\frac{e^{\nu(C)t}-1}{\nu(C) t} \\
 \leq & t \, \|v\| h_{\xi+1,\xi} \max_{\hat{t}\in[0,t_f]}\Big(|e_\xi^T e^{H\hat{t}}e_1|\Big)\max_{\hat{t}\in[0,t_f]} \bigg(\underbrace{\frac{e^{\nu(C)\hat{t}}-1}{\nu(C) \hat{t}}}_{=:\varphi(\hat{t})}\bigg) \\
\end{split}
\end{equation}
Depending on $\nu(C)$, the function $\varphi(t)$ is monotonically increasing or decreasing. To show this, we insert the Taylor series $e^{\nu(C)t} = \sum_{i=0}^\infty \frac{(\nu(C)t)^n}{n!}$ in the derivative
\begin{equation*}
 \dot{\varphi}(t) = \frac{\nu(C)te^{\nu(C)t}-e^{\nu(C)t}+1}{\nu(C)t^2}
\end{equation*}
so that we obtain
\begin{equation*}
\begin{split}
 \dot{\varphi}(t) =& \frac{\nu(C)t(1+\nu(C)t+\frac{(\nu(C)t)^2}{2!}+\ldots)}{\nu(C)t^2}\\
 -& \frac{(1+\nu(C)t+\frac{(\nu(C)t)^2}{2!}+\ldots)-1}{\nu(C)t^2} \\
 =& \frac{(\nu(C)t)^2+\frac{(\nu(C)t)^3}{2!}+\ldots)-\frac{(\nu(C)t)^2}{2!}-\frac{(\nu(C)t)^3}{3!}-\ldots}{\nu(C)t^2} \\
 =& \sum_{i=0}^\infty \bigg(\frac{1}{(n+1)!} - \frac{1}{(n+2)!}\bigg)\frac{(\nu(C)t)^{n+2}}{\nu(C)t^2} \\
 =& \nu(C) \sum_{i=0}^\infty \bigg(\frac{1}{n+1} - \frac{1}{(n+2)(n+1)}\bigg)\frac{(\nu(C)t)^{n}}{n!} \\
 \in & \nu(C) \sum_{i=0}^\infty ]0,0.5] \frac{(\nu(C)t)^{n}}{n!} \\
 = & \nu(C) ]0,0.5] \sum_{i=0}^\infty  \frac{(\nu(C)t)^{n}}{n!} = \nu(C) \underbrace{]0,0.5] e^{(\nu(C)t)}}_{>0}.
\end{split}
\end{equation*}
Thus, $\varphi(t)$ is monotonically increasing if $\nu(C)>0$, monotonically decreasing for $\nu(C)<0$, and constant for $\nu(C)=0$. As a consequence, the maximum is obtained at $t=t_f$ for $\nu(C)>0$ and at $t=0$ for $\nu(C)\leq0$. For the evaluation of $t=0$, we require the rule of L'H\^{o}pital:
\begin{equation*}
 \lim_{t\to 0} \frac{e^{\nu(C)t}-1}{\nu(C) t} = \lim_{t\to 0} \frac{\nu(C) e^{\nu(C)t}}{\nu(C)} = \lim_{t\to 0} e^{\nu(C)t} = 1.
\end{equation*}
The maximum is then obtained by
\begin{equation} \label{eq:max2}
 \overline{\varphi} = \max_{\hat{t}\in[0,t_f]} \varphi(\hat{t})= \begin{cases}
                                         \frac{e^{\nu(C)t_f}-1}{\nu(C) t_f} & \text{, for } \nu(C)> 0, \\
                                         1 & \text{, otherwise}.
                                        \end{cases}
\end{equation}

\paragraph{Computation of $\max_{\hat{t}\in[0,t_f]} |e_\xi^T e^{H\hat{t}}e_1|$} To compute $\max_{\hat{t}\in[0,t_f]} |e_\xi^T e^{H\hat{t}}e_1|$, we compute
\begin{equation*}
  e^{H\tau_k} = e^{H t_k}e^{H[0,\delta]}
\end{equation*}
as presented in \cite[Eq.~4.10]{Althoff2011b} using the matrix zonotope (see \cite[Eq.~4.8]{Althoff2011b})
\begin{equation*}
 H[0,\delta] = \{0.5 H \delta + \beta 0.5 H \delta|\beta \in [-1,1] \}.
\end{equation*}
Let us also introduce
\begin{equation*}
  \omega_k = e_\xi^T e^{H \tau_k}e_1.
\end{equation*}
Since $\omega_k$ is one-dimensional, the result is the interval $\max_{t\in\tau_k}(e_\xi^T e^{Ht}e_1) = \sup(\omega_k)$. For the entire time horizon, the maximum is simply obtained as 
\begin{equation}\label{eq:max1}
 \overline{\omega} = \max_{t\in[0,t_f]} |e_\xi^T e^{Ht}e_1| = \max_{k\in\{1,\ldots,\lceil t_f/\delta\rceil \}}(\omega_k).
\end{equation}

\paragraph{Final Result} 

using \eqref{eq:max1} and \eqref{eq:max2} in \eqref{eq:separationOfMaximizations} results in the proposition:
\begin{equation*} 
\begin{split}
 \forall t \in[0, t_f]: & \big\|e^{Ct}v - \|v\| V e^{Ht} e_1\big\| \\
 \leq & t \, \|v\| h_{\xi+1,\xi} \, \overline{\omega} \, \overline{\varphi}.
\end{split}
\end{equation*}
\end{proof}
As a short final remark, we would like to mention that we have used the MATLAB function $\mathrm{eigs}$ to obtain $\lambda_{max}$.

\ifCLASSOPTIONcaptionsoff
  \newpage
\fi

\bibliography{althoff_own,althoff_other}
\bibliographystyle{IEEEtran}

\begin{IEEEbiography}[{\includegraphics[width=1in,height=1.25in,clip,keepaspectratio]{./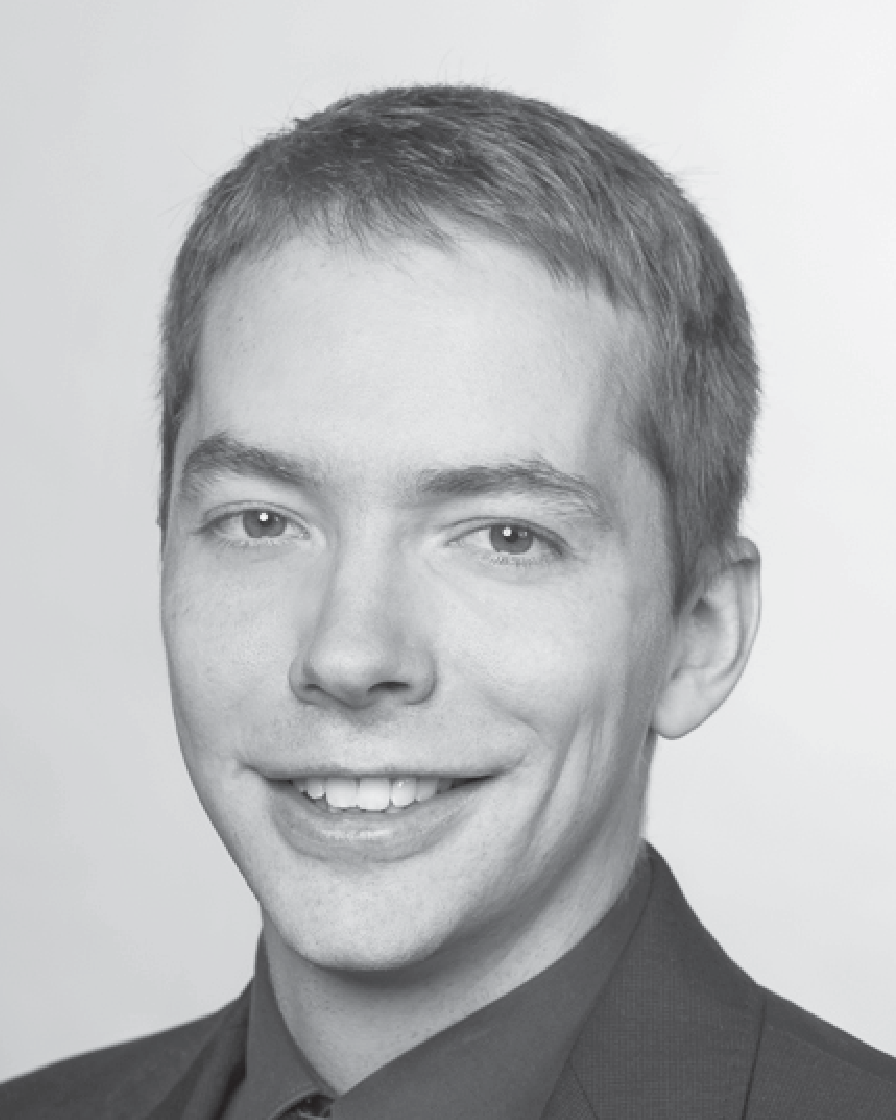}}]
{Matthias Althoff} is an assistant professor in computer science at Technische Universit\"at M\"unchen, Germany. He received his diploma engineering degree in Mechanical
Engineering in 2005, and his Ph.D. degree in Electrical Engineering in
2010, both from Technische Universit\"at M\"unchen, Germany.
From 2010 to 2012 he was a postdoctoral researcher at Carnegie Mellon University,
Pittsburgh, USA, and from 2012 to 2013 an assistant professor at Technische Universit\"at Ilmenau, Germany. His research interests include formal verification of continuous and hybrid systems, reachability analysis, planning algorithms, nonlinear control, automated vehicles, and power systems.
\end{IEEEbiography}


\end{document}